\documentclass[11pt]{amsart}
\usepackage[
text={450pt,575pt},
headheight=9pt,
centering
]{geometry}

\usepackage{pictexwd,dcpic}
\usepackage{latexsym}
\usepackage[all,cmtip]{xy}
\usepackage{times}
\usepackage{braket}
\usepackage[english]{babel}
\usepackage[utf8]{inputenc}
\usepackage{faktor}
\usepackage{paralist,url,verbatim}
\usepackage{amscd}
\usepackage[T1]{fontenc}
\usepackage{xspace}
\usepackage{comment}
\usepackage{amssymb}
\usepackage{amsmath,amsthm,amssymb,amsfonts}
\usepackage{mathtools}
\usepackage{enumitem}
\usepackage{wasysym}
\usepackage[usenames,dvipsnames]{xcolor}
\usepackage[bookmarksopen=true]{hyperref}
\definecolor{darkblue}{RGB}{0,0,160}
\definecolor{darkgreen}{RGB}{0,80,0}
\hypersetup{
colorlinks,%
citecolor=black,%
filecolor=black,%
linkcolor=darkblue,%
urlcolor=darkblue,%
}


\newcommand{\cT}{\mathcal{T}}

\newcommand{\ktrash}[1]{}

\newcommand{\change}[1]{}
\definecolor{cKlaus}{rgb}{0.15,0.40,0.03}
\definecolor{cKLinkGBU}{rgb}{1,0,0}  
\definecolor{cKLink}{rgb}{0.6,0.2,0.3}
\definecolor{cALink}{rgb}{0,0.3,0}
\usepackage[textsize=tiny]{todonotes}



\usepackage{tikz}
\usetikzlibrary{decorations.markings,intersections,positioning,calc}

  \tikzset{mylabel/.style  args={at #1 #2  with #3}{
    postaction={decorate,
    decoration={
      markings,
      mark= at position #1
      with  \node [#2] {#3};
 } } } }

\theoremstyle{plain}
\newtheorem{theorem}{Theorem}[section]

\newtheorem{corollary}[theorem]{Corollary}
\newtheorem{lemma}[theorem]{Lemma}
\newtheorem{proposition}[theorem]{Proposition}

\theoremstyle{definition}

\newtheorem{remark}[theorem]{Remark}
\newtheorem{definition}[theorem]{Definition}

\newtheorem{example}[theorem]{Example}


\newcommand{\Hom}{\operatorname{Hom}}

\newcommand{\NN}{\ensuremath{\mathbb{N}}}

\newcommand{\QQ}{\ensuremath{\mathbb{Q}}}

\newcommand{\CC}{\ensuremath{\mathbb{C}}}

\newcommand{\ZZ}{\ensuremath{\mathbb{Z}}}

\newcommand{\paar}{(X,\partial X)}
\newcommand{\bfa}{\ensuremath{\mathbf{a}}}

\newcommand{\bfk}{\ensuremath{\mathbf{k}}}

\newcommand{\bfx}{\ensuremath{\mathbf{x}}}

\newcommand{\bfZ}{\ensuremath{\mathbf{Z}}}

\newcommand{\conv}{\operatorname{conv}}

\newcommand{\kenditem}{\vspace{-1ex}\end{itemize}}

\newcommand{\kitem}{\begin{itemize}\vspace{-2ex}}

\newcommand{\tM}{\widetilde{M}}
\newcommand{\tN}{\widetilde{N}}

\newcommand{\wt}[1]{\widetilde{#1}}

\newcommand{\Der}{\operatorname{Der}}


 %
\newcommand{\kS}{{S}} 
\newcommand{\bound}{{\partial}} 
\newcommand{\ktT}{{\wt{T}}} %
\newcommand{\ktS}{{\wt{S}}} 
 %


\newcommand{\kMT}{{\mathcal T}} 
\newcommand{\kMTZ}{{\mathcal T_{\ZZ}}} 




\newcommand{\demo}{\hfill $\triangle$}

\newcommand{\spec}{\operatorname{Spec}}

\newcommand{\un}{\underline}
\renewcommand{\span}{\operatorname{Span}}

\newcommand{\lan}{\langle}
\newcommand{\ran}{\rangle}

\newcommand{\newt}{\Delta}

\usepackage{tikz-cd}

\newcommand{\cB}{\mathcal{B}}

\newcommand{\cI}{\mathcal{I}}
\newcommand{\cJ}{\mathcal{J}}

\newcommand{\cN}{\mathcal{N}}

\newcommand{\cS}{\mathcal{S}}

\newcommand{\cX}{\mathcal{X}}
\newcommand{\cY}{\mathcal{Y}}

\newcommand{\cR}{\mathcal{R}}
\newcommand{\bfT}{\bold{T}}

\newcommand{\bo}{\partial}

        \renewcommand{\bo}{\bound}

     \newcommand{\bfu}{\ensuremath{\mathbf{u}}}

\newcommand{\eps}{\epsilon}
\newcommand{\lam}{\lambda}

\newcommand{\tT}{\tilde{T}}

\newcommand{\te}{\tilde{\eta}}

\begin{document}

\title{Deformations of an affine Gorenstein toric pair}
\author{Matej Filip}
\address{University of Ljubljana, Institute of Mathematics, Physics and Mechanics, Trzaska cesta 25, Ljubljana, Slovenia}
\email{matej.filip@fe.uni-lj.si}
\thanks{Supported by Slovenian Research Agency program P1-0222 and grant J1-60011}

\begin{abstract}
We consider deformations of a pair $(X,\partial X)$, where $X$ is an affine toric Gorenstein variety and $\partial X$ is its boundary. We compute the tangent and obstruction space for the corresponding deformation functor and for an admissible  lattice degree $m$ we construct the miniversal deformation of $(X,\partial X)$ in degrees $-km$, for all $k\in \NN$. This in particular generalizes Altmann's construction of the miniversal deformation of an isolated Gorenstein toric singularity   to an arbitrary non-isolated Gorenstein toric singularity. Moreover, we show that the irreducible components of the reduced miniversal deformation are in one to one correspondence with maximal Minkowski decompositions of the polytope $P\cap (m=1)$, where $P$ is the lattice polytope defining $X$.
\end{abstract}

\keywords{Deformation theory; Toric singularities}

\subjclass[2020]{13D10, 14B05, 14B07, 14M25}

\maketitle

\section{Introduction}

Mirror symmetry suggests that there is a relationship between Fano manifolds and certain Laurent polynomials, cf.\ \cite{cor}. 
More precisely, if a Laurent polynomial $f$ is mirror to a Fano manifold $Y$, it is expected that a Fano manifold $Y$ admits a $\QQ$-Gorenstein degeneration to a singular toric variety, whose fan is the spanning fan of the Newton polytope $\newt(f)$. 
Thus studying deformations of toric varieties is important for classifying  Fano varieties, which has become an extensive research area in recent years.

Altmann \cite{alt} constructed the miniversal deformation space of an isolated affine Gorenstein toric singularity and in \cite{kas}, \cite{m} and \cite{a} his work was generalized to some other cases that will be described more in detail below. 
The deformations of the boundary $\partial X$ of a toric variety (or more generally the deformations of toroidal crossing spaces, which are locally isomorphic to boundary divisors of toric varieties) are studied using log geometry and appear also in the Gross-Siebert program, cf.\ \cite{gs}, \cite{ffr}.

Mirror symmetry also suggests to work with deformations of $\paar$, which means deformations of a closed embedding $\partial X\hookrightarrow X$, instead of only working  with deformations of $X$ or $\partial X$, see e.g.\ \cite{mojcor}, where Corti, Petracci and the author analyse the case when $X$ is a three-dimensional affine Gorenstein toric variety (having possibly non-isolated singularities).
In this case we state a conjecture that there is a canonical bijective correspondence between the set of smoothing components of $X$ (which is the same as the set of smoothing components of $\paar$) and the set of certain Laurent polynomials.

It is well known that Gorenstein toric Fano varieties are in correspondence with reflexive polytopes. By a comparison theorem of Kleppe \cite{kle}, the deformations of Gorenstein toric Fano varieties can be  obtained by (degree $0$) deformations of their affine cones, which are affine Gorenstein toric varieties. Some of the  deformations of an affine toric variety that we construct in this paper also induce deformations of the corresponding Gorenstein toric Fano varieties as we will see below.

Let $P \subset N_\QQ$ be a lattice polytope in a finite-dimensional lattice $N$. From now on, let $X$ be an affine Gorenstein toric variety, given by a rational polyhedral cone $\sigma \subset \widetilde{N}_\QQ$, where $\widetilde{N} := N \oplus \ZZ$, and $\sigma$ is the cone over $P$, embedded in the hyperplane $N_\QQ \times {1} \subset \widetilde{N}_\QQ$. There exists $R^*\in \widetilde{M}:=\Hom(\widetilde{N},\ZZ)$ such that the integral generators of $\sigma$ lie on an affine hyperplane $(R^*=1)$ and thus $(R^*=1)\cap \sigma$ is a lattice polytope that we denote by $P$. 
The torus action on $X$ induces a lattice grading on the tangent space $T^1_X$ (resp. $T^1_{\paar}$) and the obstruction space $T^2_X$ (resp. $T^2_{\paar}$) of the deformation functor of $X$ (resp. $\paar$). 
If $X$ has an isolated singularity, then $T^1_X$ is finite dimensional and lies in the single lattice degree $-R^*$. 
In the case where $X$ has non-isolated singularities, the tangent spaces $T^1_X$ and $T^1_{\paar}$ are infinite-dimensional. For the definition and basic properties of the miniversal deformation space of $\paar$ in the three-dimensional case, see e.g.\ \cite[Remark 2.1]{mojcor}. In this paper we focus on special finite-dimensional parts of the tangent space $T^1_{\paar}$ and thus avoid problems that come with infinite dimension. More precisely, let 
$B$ be the set of elements 
$m\in \widetilde{M}$ such that $m$ takes value $1$ on some face $G$ of $P$ and it has value strictly less than $1$ for any lattice point on $P$ lying outside $G$, cf.\ \eqref{eq minb}.
For any $m\in B$ we are going to construct a deformation of $\paar$ which is maximal with prescribed tangent space $\bigoplus_{k\in \NN}T^1_{\paar}(-km)\subset T^1_{\paar}$, i.e.\ we cannot extend it to a deformation of $\paar$ with a larger base space by keeping its tangent space fixed. We say that this deformation is miniversal in degrees $-km$, $k\in \NN$, see Section \ref{sec obs}. For each $k\in \NN$ we thus get a homogeneous deformation in degree $-km$. We can also present those homogeneous deformations in functorial language and the differential graded Lie algebra that controls it is the Harrison differential graded Lie algebra restricted to the degree $-km$, see \cite{fil}.

In the following we are going to emphasize the main differences between this paper and the papers \cite{alt}, \cite{kas}, \cite{m} which also construct some miniversal deformation spaces. Since $X$ is Gorenstein, the results of  \cite{alt}, \cite{kas}, \cite{m} can only be applied to the case when $P$ has edges of lattice length $1$. More precisely, they all consider homogeneous deformations in a primitive lattice degree $-m\in \tM$, with $m\in \sigma^\vee$ and some assumptions on the edges of $\sigma\cap (m=1)$, which imply that the results in the Gorenstein case work only for $m=R^*$ and $P$ having all edges of lattice length $1$, which is equivalent to $X$ being smooth in codimension $2$. The assumption $m\in \sigma^\vee$ in \cite{alt}, \cite{kas}, \cite{m} is very  crucial, since the homogeneous deformation in degree $-m$ is constructed by  proving flatness of some semigroups and this cannot be obtained if $m\not\in \sigma^\vee$. 
However, in the Fano classification problems mentioned above, it is essential to understand deformations of $X$ associated to an arbitrary lattice polytope $P$.

In this paper we drop both of the assumptions made in the papers \cite{alt}, \cite{kas}, \cite{m}, which means we consider also degrees $m\not\in \sigma^\vee$ and $P$ arbitrary. Note that, when considering deformations of the affine cone in order to deform the corresponding Fano Gorenstein toric variety, we require that $m \notin \sigma^\vee$ and that $m$ takes value $0$ on the point $(\underline{0}, 1) \in \widetilde{N}$, where $\underline{0} \in P \subset N_\QQ$ is the unique interior lattice point of $P$. In Remark \ref{proj rem} we describe the cases where our deformations deform also the corresponding Fano Gorenstein toric variety, which is not possible with the results obtained in \cite{alt}, \cite{kas}, \cite{m}. 

The main result of this paper is the following:

\begin{theorem}
Let $m \in B$, and let $X$ be an affine Gorenstein toric variety associated to a lattice polytope $P$. We construct a deformation of $\paar$ which is miniversal in degrees $-km$, for all $k \in \NN$. The irreducible components of the reduced miniversal deformation space are in one-to-one correspondence with maximal Minkowski decompositions of the polytope $P \cap (m = 1)$.
\end{theorem}

For any $m\in B\subset \tM$ we construct a deformation of $\paar$ in Section \ref{sec const def}. The main result of this section  is Theorem \ref{th flat x map}, where we prove that our constructed family is flat 
directly by lifting relations among equations of $X$ and not by bypassing the problem to flatness of semigroups as it was done in \cite{alt}, \cite{kas}, \cite{m}. This in particular enables us to work also with homogeneous deformations in degrees $-m$ with $m\not\in \sigma^\vee$. 
We describe the tangent space $T^1_{\paar}$ and the obstruction space $T^2_{\paar}$ in Subsection \ref{the tang and obs} and we show that the tangent space of our base space equals $\bigoplus_{k\in \NN}T^1_{\paar}(-km)$ in Subsection \ref{sub sec of b}. Since the edges of $P$ might have lattice length greater than $1$, we see that it is more natural to consider homogeneous deformations in degrees $-k m$ for all $k\in \NN$. Note that if $P$ has edges of lattice length $1$, then $T^1_{\paar}(-km)=T^1_X(-km)=0$ for all $k\geq 2$ and thus we only get a homogeneous deformation in degree $-m$.

Moreover, considering the deformations of $\paar$ instead of only $X$ in fact simplifies the construction since the tangent space of the naturally constructed deformation family has dimension 
$$\dim_\CC \bigoplus_{k\in \NN}T^1_{\paar}(-km)=1+\dim_\CC \bigoplus_{k\in \NN}T^1_X(-km).$$ In fact, even in the isolated case (with $m = R^*$), the tangent space of the deformation family constructed from the aforementioned semigroups has dimension $1 + \dim_\CC T^1_X = 1 + \dim_\CC T^1_X(-R^*)$. Thus, non-trivial computations are required to obtain a tangent space of the correct dimension and to prove the bijectivity of the Kodaira–Spencer map. In this paper we see that even if we are only interested in deformations of $X$ it is better to consider deformations of $\paar$ and then apply the obvious  forgetful functor.

Finally, to prove miniversality, we prove that the Kodaira--Spencer map of our constructed deformation family is bijective (which is proven in Section \ref{kod spe sec} in a completely different way than in the previously mentioned papers) and that the obstruction map is surjective (which is proven in Section \ref{sec obs}).  We conclude the paper by showing that the reduced irreducible components of the constructed deformation of $(X,\partial X)$ 
are in one to one correspondence with maximal Minkowski decompositions of the polytope $P\cap (m=1)$ in Section \ref{sec irr compo}.

\section*{Acknowledgement}
I am very grateful to Alessio Corti for many stimulating conversations and for  suggesting to work with deformations of pairs. I also wish to thank to  
Klaus Altmann, Alexandru Constantinescu, Nathan Ilten and Andrea Petracci for many useful conversations.

\section{Preliminaries}\label{sec free pairs}

\subsection{The setup}\label{sub the setup}
We fix $\CC$ to be an algebraically closed field of characteristic $0$. Let $P$ be a lattice polytope with vertices $v^1,\dots,v^p$ in $N$, where $N$ is a lattice. Putting $P$ on height $1$ gives us a  rational, polyhedral cone $$\sigma=\lan a^1,\dots,a^p \ran\subset \tN_\QQ=(N\oplus \ZZ)\otimes_\ZZ \QQ$$ with $a^i=(v^i;1)\in\tN$, $i=1,\dots,p$.  Let $M$ denote the dual lattice of $N$ and let us consider the monoid $S=\sigma^{\vee}\cap \tM=\sigma^{\vee}\cap (M\oplus \ZZ)$, where $\sigma^{\vee}:=\{r\in \tM_\QQ~|~\lan \sigma,r \ran\geq 0\}$. Every affine Gorenstein toric variety is isomorphic to $X:=X_P:=\spec \CC[S]$ for some lattice polytope $P$, where $\CC[S]:=\bigoplus_{u\in S}\chi^u$ is the semigroup algebra.

Let $E_1,E_2,\dots,E_n$ be the edges of $P$. We denote the lattice length of an edge $E_i$ by $\ell(E_i)\in \NN$, for $i=1,\dots,n$. We equip every edge $E_i=[w^i,z^i]$, connecting two vertices $w^i$ and $z^i$, with an orientation and present it as a vector 
\begin{equation}\label{eq pop1}
d^i:=w^i-z^i\in N.
\end{equation}

\begin{definition}\label{d:eta(c)}
For any face $G\subset P$ (including $G=P$) and
for $c\in M$ we choose a vertex $v_G(c)$ of $G$ where $\lan c,\cdot \ran$ becomes minimal on $G$. 
We define $\eta_G(c) :=-\min_{v\in G}\langle v,c\rangle=
-\langle v_G(c),c\rangle \in \ZZ$, which is independent of the choice of $v_G(c)$. 
If $G=P$ we also write $\eta(c)$ for $\eta_P(c)$ and $v(c)$ for $v_P(c)$.
\end{definition}

The Hilbert basis of $S=\sigma^{\vee}\cap \tM$ is equal to
\begin{equation}\label{eg hilbbas}
E:=\big\{s_1=(c_1;\eta(c_1)),\dots,s_r=(c_r;\eta(c_r)),R^*=(\un{0};1)\big\},
\end{equation}
with uniquely determined elements $c_i\in M$ (see e.g.\ \cite[Section 4.3]{alt}).

\subsection{Equations of $S$ and their linear relations}\label{subsec 1.2}
  By \eqref{eg hilbbas}, we obtain $X=\spec \CC[u,x_1,\dots,x_r]/\cI_S$, where $\cI_S$ is the kernel of the map $\CC[u,x_1,\dots,x_r]\to \CC[S]$, $u\mapsto R^*$, $x_j\mapsto s_j$.

Note that  we can write every element $s\in S=\sigma^\vee\cap \tM$ in a unique way as $s=\partial(s)+nR^*$, where $n\in \NN$ and $\partial(s)\in \partial(S)$ is an element of the boundary of $S$, defined as
\begin{equation}\label{eq partial s}
\partial(S):=\{s\in S~|~s-R^*\not\in S\}.
\end{equation}

For every $\bfk=(k_1,\dots,k_r)\in \NN^r$ we define
\begin{equation}\label{eq etagbfk}
\eta_G(\bfk) := \eta_G(k_1c_1)+\eta_G(k_2c_2)+\cdots +\eta_G(k_rc_r) - \eta_G(k_1c_1+k_2c_2+\cdots +k_rc_r)
\in\NN
\end{equation}
and we write $s_\bfk:=\sum_{i=1}^rk_is_i\in S\subset \tM$ and $c_{\bfk}:=\sum_{i=1}^rk_ic_i\in M$. We immediately see that there is a unique decomposition 
$s_{\bfk}=\bo(\bfk)+\eta_P(\bfk)R^*$
with $\bo(\bfk)=(c_\bfk;\eta_P(c_\bfk))\in \partial(\kS)$.

For every $\bfk\in \NN^r$ we choose $b_i\in \NN$ such that 
$
 \bo(\bfk)=\sum_{i=1}^rb_is_i 
$
 and denote
$$\bfx^{\bfk}:=\prod_{i=1}^rx^{k_i}_i,~~~\bfx^{\bo(\bfk)}:=\prod_{i=1}^rx_i^{b_i}.$$
We define the binomials 
\begin{equation}\label{eq fbfk xu2}
f_\bfk(u,\bfx):=\bfx^\bfk - \bfx^{\bo(\bfk)}\,u^{\eta_P(\bfk)}\in \CC[u,\bfx]:=\CC[u,x_1,\dots,x_r]
\end{equation}
and the following lemma shows that they generate the ideal $\cI_S\subset \CC[u,x_1,\dots,x_r]$.

\begin{lemma}\label{lem 29}
It holds that $\cI_{S}=(f_{\bfk}~|~\bfk\in \NN^r)$ and 
the module of linear relations between the $f_\bfk$, which is the kernel of the map
\[
  \textstyle
  \psi:\bigoplus_{\bfk\in \NN^r}\CC[u,x_1,\dots,x_r]e_\bfk\xrightarrow{e_\bfk\mapsto f_\bfk} \cI_\kS\subset \CC[u,x_1,\dots,x_r],
\]
 is spanned by $R_{\bfa,\bfk}:=e_{\bfa+\bfk} - \bfx^\bfa e_\bfk - u^{\eta_P(\bfk)}e_{\bo(\bfk)+\bfa}$, for $\bfa,\bfk\in \NN^r$. 
\end{lemma}
\begin{proof}
The first statement follows immediately by definition, the proof of the second one is the same as the proof of \cite[Lemma 5.6]{m}.
\end{proof}

\begin{example}\label{ex house pop}
 Consider the polytope $P=\conv\{(0,0),(2,0),(2,2),(1,3),(0,2)\}$, where $\conv$ denotes the convex hull.
 $$
 \begin{tikzpicture}[scale=0.8]
\draw[->,thick,  color=black] 
(0,0) -- (2,0);
\draw[->,thick,  color=black] 
(2,0) -- (2,2);
\draw[->,thick,  color=black] 
(2,2) -- (1,3);
\draw[->,thick,  color=black] 
(1,3) -- (0,2);
\draw[->,thick,  color=black] 
(0,2) -- (0,0);
\fill[thick,  color=black]
  (0,0) circle (2.5pt) (2,0) circle (2.5pt) (2,2) circle (2.5pt) (1,3) circle (2.5pt)
  (0,2) circle (2.5pt) (0,1) circle (2.5pt) (1,0) circle (2.5pt) (2,1) circle (2.5pt);
\draw[thick,  color=black]
(-.5,1) node{$d^3$} (0.2,2.8) node{$d^2$} (1.8,2.8) node{$d^1$} (2.5,1) node{$d^5$} (1,.4) node{$d^4$} (-.3,-.3) node{$v^4$} (2.3,-.3) node{$v^5$} (2.3,2.2) node{$v^1$} (1,3.3) node{$v^2$} (-.3,2.2) node{$v^3$};
\end{tikzpicture}
$$
The following notation will be used in the examples that follow.
We denote oriented edges of $P$ by
$$d^1=(-1,1),~d^2=(-1,-1),~d^3=(0,-2),~d^4=(2,0),~d^5=(0,2)$$
and vertices of $P$ by 
$$v^1=(2,2),~~v^2=(1,3),~~v^3=(0,2),~~v^4=(0,0),~~v^5=(2,0).$$

The Hilbert basis $E$ of $S$ is in this case equal to
$$E=\{(0,1;0),(-1,0;2),(-1,-1;4),(0,-1;3),(1,-1;2),(1,0;0),(0,0;1)\},$$
i.e. $c_1=(0,1)$, $\eta(c_1)=0$, $c_2=(-1,0)$, $\eta(c_2)=2$,\dots, $c_6=(1,0)$, $\eta(c_6)=0$.

Let $\bfk_1:=e_4+e_6:=(0,0,0,1,0,1)\in \NN^6$, $\bfk_2=e_3+e_6$ and 
$$\bfk_3=e_3+e_5,~~~\bfk_4=e_2+e_5,~~~\bfk_5=e_2+e_4,~~~\bfk_6=e_2+e_6,~~~\bfk_7=e_1+e_3,~~~\bfk_8=e_1+e_5,~~~\bfk_9=e_1+e_4.$$
This gives us 
$$f_{\bfk_1}=x_4x_6-x_5u,~~~f_{\bfk_2}=x_3x_6-x_4u,~~~f_{\bfk_3}=x_3x_5-x_4^2,~~~f_{\bfk_4}=x_2x_5-x_4u,$$
$$f_{\bfk_5}=x_2x_4-x_3u,~~~f_{\bfk_6}=x_2x_6-u^2,~~~f_{\bfk_7}=x_1x_3-x_2u^2,~~~f_{\bfk_8}=x_1x_5-x_6u^2,~~~f_{\bfk_9}=x_1x_4-u^3.$$
\end{example}

\begin{remark}
By direct computer calculation, we can verify that the polynomials $f_{\bfk_i}$, for $i = 1, \dots, 9$, are  the generators of $\cI_S$; for example, using the following \texttt{Macaulay2} code:
\begin{verbatim}
A = matrix{{0,-1,-1,0,1,1,0},{1,0,-1,-1,-1,0,0},{0,2,4,3,2,0,1}}
M = toricMarkov(A)
R = QQ[x_1,x_2,x_3,x_4,x_5,x_6,u]
I = toBinomial(M,R)
\end{verbatim}
\end{remark}

\subsection{The monoid $\ktT(G)$}\label{subsec geo}

Recall that $E_1, \dots, E_n$ are the edges of $P$, and without loss of generality, we assume that $E_1, \dots, E_{n_G}$ are the edges of a face $G \subset P$, for some $n_G \in \NN$.

Recall the vectors $d^i$ from \eqref{eq pop1}. In what follows, we construct an important vector space $\cT(G)$, which was already defined in \cite[Section~2.2]{alt}.
We choose an orientation for every $2$-face $\eps$ of $G$: let $\delta_\eps(d^i)\in\{0,1,-1\}$ with the property that $\delta_\eps(d^i)=0$ if $d^i\not\in\eps$ and $\delta_\eps(d^i)\in \{-1,1\}$ if $d^i\in \eps$ and moreover we require $\sum_{d^i\in \eps}\delta_\eps(d^i)\cdot d^{i}=0$. Since $\delta_\eps(d^i)$ is defined with respect to the face $\eps$, such an orientation always exists, as the boundary of each $2$-face is a closed polygonal cycle.  
We define the vector space
$$\cT(G)=\{(t_{1},\dots,t_{n_G})\in \QQ^{n_G}~|~\sum_{d^i\in \eps}\delta_\eps(d^i)t_{i}d^{i}=0\text{ for every 2-face }\eps\text{ in } G\}.$$
\begin{definition}
We define the lattice $\kMTZ(G)\subset\kMT(G)$ by
$$
  (t_1,\dots,t_{n_G})\in \kMTZ(G):\iff
    t_{i}d^i \in N \text{  for each }i=1,\dots,n_G.
$$
Moreover, let us define the monoid
$$\tT(G):=\span_\NN\{\ell(d^1)t_1,\dots,\ell(d^n)t_{n_G}\}\subset \cT^*_\ZZ(G),$$ where $\ell(d^i) = l_i$ denotes the lattice length of the oriented edge $d^i$, and  $\cT^*_\ZZ(G)$ is the dual lattice of $\cT_\ZZ(G)$. Note that the $t_i$ serve as coordinate functions on $\cT(G)$, and thus correspond to elements of the dual space $\cT^*(G)$.

\end{definition}

Without loss of generality assume that one of the vertices of $G$ is equal to $0\in N$ (note that this may require shifting $P$ by a lattice vector). For $c \in M$, recall the vertex $v_G(c)$ of $G$ as defined in Definition \ref{d:eta(c)}.
\begin{definition}
\label{def-etsTildeZ}
Let $c \in M$ and let us choose a path along the edges of $G$, going through vertices  
$
w^1 = 0, \; w^2, \; \ldots, \; w^k = v_G(c),
$
where  $k-1$ is the number of edges in the path, i.e. $[w^{j}, w^{j+1}]$ is an edge of $G$ for every $j = 1, \dots, k-1$.
We denote this path by $p_{w^1 \leadsto w^k}$.

For every $c\in M$ we define 
$$ 
\tilde{\eta}_G(c):=\sum_{j=1}^{k-1} \lan w^{j}-w^{j+1},c\ran \cdot t_{[w^j,w^{j+1}]}\in \cT^*_\ZZ(G),
$$
where $\cT^*_\ZZ(G)$ is the dual lattice of $\cT_\ZZ(G)$ and $t_{[w^j,w^{j+1}]}$ correspond to the edge $[w^j,w^{j+1}]$, i.e.\ if $E_i=[w^j,w^{j+1}]$, then $t_i=t_{[w^j,w^{j+1}]}$.

Similarly as in \eqref{eq etagbfk}, for $\bfk=(k_1,\dots,k_r)\in \NN^r$ we also define
$$
\te_G(\bfk) := \te_G(k_1c_1)+\te_G(k_2c_2)+\cdots +\te_G(k_rc_r) - \te_G(k_1c_1+k_2c_2+\cdots +k_rc_r)
\in\cT_\ZZ^*(G).
$$
\end{definition}

\begin{lemma}\label{lem gen kt gor}
It holds that $\te_G(\bfk)\in \ktT(G)$. 
\end{lemma}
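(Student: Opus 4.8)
The plan is to compute $\te(\bfk)$ from one carefully chosen collection of edge-paths, and then read off the two required features --- ``the coefficients are $\ge 0$'' and ``each coefficient is divisible by $\ell(d^i)$'' --- directly from the defining property of the paths $\un\mu^c(v)$ of Definition \ref{def paths}.

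\emph{Step 1 (a path formula for $\te$).} For $c\in M$ and any edge-path $\gamma$ in $P$ from $0$ to a minimizing vertex $v(c)$, with associated vector $\un\gamma=(\gamma_1,\dots,\gamma_n)$ as in the paragraph preceding Definition \ref{def paths} (so that $v(c)=\sum_i\gamma_i d^i$), I would record the identity
\[
\te(c)\;=\;-\sum_{i=1}^n\gamma_i\,\langle d^i,c\rangle\, t_i\ \in\ \cT^*_\ZZ(P).
\]
This matches Definition \ref{def-etsTildeZ} step by step, and it is independent of $\gamma$ (and of the choice of $v(c)$): two such paths differ by a closed path, which by Remark \ref{orient rem} is a $\ZZ$-combination of oriented $2$-faces $\un\eps$, and the $\un\eps$-contribution $-\sum_i\eps_i\langle d^i,c\rangle t_i$ vanishes in $\cT^*_\ZZ(P)$ because pairing the defining relation $\sum_{d^i\in\eps}\delta_\eps(d^i)t_id^i=0$ of $\cT_\ZZ(P)$ with $c$ gives exactly $\sum_{d^i\in\eps}\delta_\eps(d^i)\langle d^i,c\rangle t_i=0$; changing the minimizing vertex is harmless since edges of the minimizing face pair to $0$ with $c$. (This is essentially in \cite{m}.)

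\emph{Step 2 (route everything through $v(\bar c)$).} Put $\bar c:=k_1c_1+\cdots+k_rc_r$ and fix one minimizing vertex $\bar v:=v(\bar c)$ together with one edge-path $\un\lam(\bar v)$ from $0$ to $\bar v$. I would compute $\te(k_jc_j)$ via the path $\un\lam^{k_jc_j}(\bar v)=\un\lam(\bar v)+\un\mu^{k_jc_j}(\bar v)$ of Definition \ref{def paths} (a path from $0$ to $v(k_jc_j)$), and compute $\te(\bar c)$ via $\un\lam(\bar v)$ itself. Substituting into $\te(\bfk)=\sum_{j}\te(k_jc_j)-\te(\bar c)$ and using $\langle d^i,\bar c\rangle=\sum_j\langle d^i,k_jc_j\rangle$, all the $\lam_i(\bar v)$-terms cancel and one is left with
\[
\te(\bfk)\;=\;\sum_{i=1}^n\Bigl(\ \sum_{j=1}^r\bigl(-\mu_i^{k_jc_j}(\bar v)\,\langle d^i,k_jc_j\rangle\bigr)\Bigr)\,t_i .
\]

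\emph{Step 3 (nonnegativity and divisibility).} By the defining property of $\un\mu^c(v)$ one has $\mu_i^{k_jc_j}(\bar v)\,\langle d^i,k_jc_j\rangle\le 0$, so every summand $-\mu_i^{k_jc_j}(\bar v)\langle d^i,k_jc_j\rangle$ is $\ge 0$; and writing $d^i=\ell(d^i)\,\widehat{d^i}$ with $\widehat{d^i}\in N$ primitive gives $\langle d^i,k_jc_j\rangle=k_j\,\ell(d^i)\,\langle\widehat{d^i},c_j\rangle$ with $\langle\widehat{d^i},c_j\rangle\in\ZZ$, so each summand is a nonnegative integer multiple of $\ell(d^i)$. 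Hence $\te(\bfk)=\sum_{i=1}^n b_i\,\ell(d^i)\,t_i$ with $b_i\in\NN$, i.e. $\te(\bfk)\in\span_\NN\{\ell(d^1)t_1,\dots,\ell(d^n)t_n\}=\ktT$.

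The only delicate point is the bookkeeping choice in Step 2: all the paths for the $\te(k_jc_j)$ must first run from $0$ to the \emph{common} vertex $\bar v=v(\bar c)$ and only then descend along $\un\mu^{k_jc_j}(\bar v)$. That is precisely what makes the ``linear part'' $\un\lam(\bar v)$ cancel against $\te(\bar c)$, leaving exactly the descending parts, whose sign is pinned down by Definition \ref{def paths}; everything else is routine once the path formula of Step 1 is in place.
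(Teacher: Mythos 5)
Your proposal is correct and matches the paper's proof in essence: both route the edge-paths for $\te(k_jc_j)$ through the common minimizing vertex $v(\bar c)$ so that the $\un\lam$-portion cancels against the contribution from $\te(\bar c)$, leaving only the descending parts $\un\mu^{c_j}(v(\bar c))$, whose coefficients are visibly nonnegative multiples of $\ell(d^i)$. Your Step 1 simply makes explicit the path-independence that is implicit in Definition \ref{def-etsTildeZ}, and your Step 3 unpacks the nonnegativity and divisibility observation the paper compresses into its final sentence.
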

\begin{proof}
Let $c:=k_1c_1+\cdots+ k_rc_r$. For each $j\in \{1,\dots,r\}$ we pick a path $p_{v_G(c)\leadsto v_G(c_j)}$ along the edges of $G$ starting in $v_G(c)$ and ending in $v_G(c_j)$:
 $$v^1_j=v_G(c),~v^2_j,\dots,v^{p_j}_j=v_G(c_j),$$ such that $\lan v_j^l-v_j^{l+1},c_j\ran\geq 0$ for all $l\in \{1,\dots,p_j-1\}$. Note that it is always possible to pick such a path since $c_j$ achieves minimum on $G$ at $v(c_j)$. 
 Thus for computing $\te_G(c)$ we choose an arbitrary path from $0$ to $v(c)$ and for computing $\te_G(k_jc_j)$ we pick first the previous path from $0$ to $v(c)$  and then the above path $p_{v_G(c)\leadsto v_G(c_j)}$, for all $j=1,\dots,r$.
Note that, by the definition of $\cT^*_\ZZ(G)$, every choice of the above paths yields the same element $\te_G(\bfk)\in \cT^*_\ZZ(G)$.
 Thus we have 
\begin{equation}\label{eq main eq}
\te_G(\bfk)=\sum_{j=1}^r\Big(\sum_{l=1}^{p_j-1}\lan v^l_j-v^{l+1}_j,k_jc_j\ran t_{[v^l_j,v^{l+1}_j]}\Big).
\end{equation}
The coefficients before $t_{[v^l_j,v^{l+1}_j]}$ are either zero or positive multiples of $\ell([v^l_j,v^{l+1}_j])$, which proves the claim.
\end{proof}

\begin{definition}\label{def map pi}
We define the \emph{degree map} $\deg: \cT^*_\ZZ(G)\otimes_\ZZ\QQ\to \QQ$, which maps all $t_i$ to $1$. 
\end{definition}

For $c\in M$ we see that 
\begin{equation}\label{eq deg te}
\deg(\te_G(c))=\sum_{j=1}^{k-1}\lan v^{j}-v^{j+1},c\ran=-\lan v_G(c),c\ran=\eta_G(c)\in \NN,
\end{equation}
where we used that $v^1=0$ and $v^k=v_G(c)$ as in Definition \ref{def-etsTildeZ}. 
\begin{corollary}\label{cor zapiszi}
We can choose $z_i(\bfk)\in \NN$ such that  $\te_G(\bfk)=\sum_{i=1}^{n_G}l_iz_i(\bfk)t_i\in \ktT(G)$, where recall that $l_i=\ell(d^i)$ is the lattice length of the edge $d^i$. For any such choice of $z_i(\bfk)$ it holds that 
\begin{equation}\label{eq uiuli}
\eta_G(\bfk)=\sum_{i=1}^{n_G}l_iz_i(\bfk).
\end{equation}
\end{corollary}
\begin{proof}
It follows immediately by Lemma \ref{lem gen kt gor} and \eqref{eq deg te}.
\end{proof}

 Note that the choice of $z_i(\bfk)$ from \eqref{eq uiuli} is not uniquely determined, since we can for example choose different paths $p_{v_G(c)\leadsto v_G(c_j)}$ in the proof of Lemma \ref{lem gen kt gor}, from which we obtained $z_i(\bfk)$.

\begin{example}\label{ex 2.7}
In Example \ref{ex house pop} (where $G=P$ and we write $\te$ for $\te_P$) we 
first choose $$v(c_1)=v^4=(0,0),~~~ v(c_2)=v^5=(2,0),~~~v(c_3)=v^1=(2,2),$$
$$v(c_4)=v^2=(1,3),~~~v(c_5)=v^3=(0,2),~~~v(c_6)=v^4=(0,0)$$
and thus
$$\te(c_1)=0,~~~\te(c_2)=2t_4,~~~\te(c_3)=2t_4+2t_5,~~~\te(c_4)=2t_3+t_2,~~~\te(c_5)=2t_3,~~~\te(c_6)=0.$$
Moreover, for $\bfk_1=e_4+e_6$ we have
$$\te(\bfk_1)=\te(c_4)+\te(c_6)-\te(c_4+c_6)=\te(c_4)+\te(c_6)-\te(c_5)=t_2.$$
For $\bfk_2=e_3+e_6$ we have 
$$\te(\bfk_2)=\te(c_3)+\te(c_6)-\te(c_4)=2t_4+2t_5-2t_3-t_2=t_2,$$
since $\te(c_3)=2t_4+2t_5=2t_3+2t_2\in \ktT(P)$. Note that we also obtain $\te(\bfk_2)=t_2$
by \eqref{eq main eq} since
 the path $p_{v(c_4)\leadsto v(c_3)}$ is going through $v^1_3=v(c_4)=v^2,~v^2_{3}=v(c_3)=v^1$ and
$\lan v^2-v^1,c_3\ran=0$ and moreover, the path
$p_{v(c_4)\leadsto v(c_6)}$ is going through $v^1_6=v(c_4)=v^2$, $v^2_6=v^3,$ $v^3_6=v(c_6)=v^4$ and $\lan v^2-v^3,c_6\ran=1$ and $\lan v^3-v^4,c_6\ran=0$. 
In the same way we compute
 $$\te(\bfk_3)=0,~~~\te(\bfk_4)=t_1,~~~\te(\bfk_5)=t_1,~~~\te(\bfk_6)=2t_4,~~~\te(\bfk_7)=2t_5,~~~\te(\bfk_8)=2t_3,~~~\te(\bfk_9)=t_2+2t_3,$$
from which we see that $z_2(\bfk_9)=1$, $z_3(\bfk_9)=1$, $z_i(\bfk_9)=0$ for $i=1,4,5$ and $z_3(\bfk_8)=1$, $z_i(\bfk_8)=0$ for $i=1,2,4,5$. 
Note that, with a different choice of paths, we may have $\te(c_4) = 2t_5 + t_1$ and thus $\te(\bfk_9) = t_1 + 2t_5$, which leads to a different choice of $z_i(\bfk_9)$ as described above, since in this case we have $z_1(\bfk_9) = z_5(\bfk_9) = 1$ and $z_i(\bfk_9) = 0$ for all $i = 2, 3, 4$.
\end{example}

\section{Flatness}\label{sec const def}
In this section we construct a flat family that is deforming $\paar$.
For the following definition see also \cite[Section 3.4.4]{ser}.
\begin{definition}\label{def mini}
A \emph{deformation} of $X$ is a flat family of schemes $f:\cX\to \cS$ with $0\in \cS$ such that $f^{-1}(0)=X$.
A \emph{deformation of a pair} $\paar$ is a \emph{deformation of a closed embedding} $\partial X\hookrightarrow X$, which is a diagram:
\begin{equation}\label{eq def diagram prvi}
\begindc{\commdiag}[500]
\obj(0,1)[01]{$\widetilde{\cY}$}
\obj(1,0)[10]{$\widetilde{\cS}$}
\obj(2,1)[21]{$\widetilde{\cX}$}
\mor{01}{10}{$f_1$}[\atright,\solidarrow]
\mor{01}{21}{}[\atleft, \injectionarrow]
\mor{21}{10}{$f_2$}
\enddc
\end{equation}
with $f_i$ flat for $i\in \{1,2\}$ and $f_1^{-1}(0)=\partial X$, $f^{-1}_2(0)=X$. 
\end{definition}

It is straightforward to define a deformation functor $F_X$ (resp. $F_{\paar}$) to be the isomorphism classes of deformations of $X$ (resp. $\paar$) over $\spec A$, where $A$ is artinian local $\CC$-algebra with residue field $\CC$ (see e.g.\ \cite[Section 3.4.4]{ser}).
The corresponding tangent spaces we denote by $T^1_X$ and $T^1_{\paar}$. We are going to analyse those tangent spaces as well as the obstruction spaces in Subsection \ref{the tang and obs}.

\subsection{Homogeneous deformations}\label{sub hom def}

Let $m\in \tM$ be such that $(m=1)\cap (R^*=1)\cap \sigma$ equals a face $G$ of $P$, which is not a vertex (here we put $P$ on height $1$, i.e.\ $P=(R^*=1)\cap \sigma$), and moreover, $m$ has value $< 1$ for any lattice point on $P$ lying outside $G$. If $m=R^*$, then $G=P$.  Note that 
\begin{equation}\label{eq minb}
m\in B:=\{R^*-s~|~s\in \partial(S),~(s=0)\cap P~\text{is a face of $P$, which is not a vertex}\}
\end{equation}
and thus every such $m$ can be written as 
 $m=R^*-\sum_{i=1}^rn_is_i\in \tM$ for some $n_i\in \NN$. 
Assume that one of the vertices of $G\subset P\subset N$ is equal to $0\in N$.
We are going to construct a deformation of an affine toric variety $X_P$ using the monoid  $\ktT(G)$ (with the deformation parameters having degrees $km\in \tM$ for $k\in \NN$).

We embed $\spec \CC[\ktT(G)]$ into $\spec \CC[u_1,\dots,u_{n_G}]=\spec \CC[\bfu]$, where $n_G$ denotes the number of edges on $G\subset P$. We denote the kernel of the map $\CC[u_1,\dots,u_{n_G}]\to \CC[\ktT(G)],~u_i\mapsto \tilde{t}_i$ by 
\begin{equation}\label{eq cItitt}
\cI_{\ktT(G)}\subset \CC[u_1,\dots,u_n].
\end{equation}
We choose $z_i(\bfk)\in \NN$ such that $\te_G(\bfk)=\sum_{i=1}^{n_G}l_iz_i(\bfk)t_i\in \ktT(G)$, cf.\ Corollary \ref{cor zapiszi}, and denote
\begin{equation}\label{eq gbfk}
\bfu^{\te_G(\bfk)}:=u^{\eta_P(\bfk)-\eta_G(\bfk)}\prod_{i=1}^{n_G}u^{z_i(\bfk)}_i\in \CC[u,u_1,\dots,u_{n_G}]=\CC[u,\bfu].
\end{equation}
Moreover, let us fix a representation $m=R^*-\sum_{i=1}^rn_is_i\in \tM$ with $n_i\in \NN$ and define 
$$\bfx_m:=\prod_{i=1}^{r}x_i^{n_i}.$$
Note that if $m=R^*$, then $\bfx_m=1$.

We also introduce variables $T_{ij}$, $i=1,\dots,n_G$, $j=1,\dots,l_i=\ell(E_i)$  of degrees $\deg T_{ij}=jm\in \tM$ for all $i$ and define the maps 
\begin{equation}\label{eq ui new 2}
f_{\bfu\to \bfT}:\CC[\bfu]\to \CC[\bfx_m,u,\bfT],~~~
f_{\bfu\to \bfT}(u_{i}):=u^{l_i}+\sum_{j=1}^{l_i}\Big(\bfx^j_mu^{l_i-j}T_{ij}\Big).
\end{equation}
\begin{equation}\label{eq ui new 34}
g_{\bfu\to \bfT}:\CC[u,\bfu,\bfx]\to \CC[u,\bfT,\bfx],~~~u_{i}\mapsto f_{\bfu\to \bfT}(u_{i}),~~x_j\mapsto x_j,~~u\mapsto u.
\end{equation}

\begin{remark}
The variables of our base space will be $T_{ij}$ and that this choice of $u_i$ is the most natural one will become clear in Section \ref{kod spe sec} (more precisely in the proof of Proposition \ref{prop 32}) where we will see that the base space has the right dimension with this choice. From the same proof it will also become clear why we are  working with $m\in B$. Note also that $u^{l_i}$ and $\bfx^j_mu^{l_i-j}T_{ij}$ (for all $j$) have degree $l_iR^*$  and thus $f_{\bfu\to \bfT}(u_{i})$ is homogeneous.
\end{remark}

Let us denote $$\CC[\bfT]:=\CC[T_{ij}~|~i\in\{1,\dots,n_G\},~(i,j)\in \{(i,1),\dots,(i,l_i)\}]=\CC[T_{11},\dots,T_{1l_1},\dots,T_{n_G1},\dots,T_{n_Gl_{n_G}}]$$ and define
\begin{equation}\label{eq fbfk}
F_{\bfk}(\bfu,\bfx):=\bfx^{\bfk}-\bfx^{\bo(\bfk)}\bfu^{\te_G(\bfk)}\in \CC[u,\bfu,\bfx].
\end{equation}
Let $\cI_F:=(F_{\bfk}~|~\bfk\in \NN^r)\subset \CC[u,\bfu,\bfx]$ be the ideal generated by $F_{\bfk}$. 
We denote by $\cJ_{\ktT(G)} \subset \CC[u, \bfT, \bfx_m]$ (resp.\ $\cJ_F \subset \CC[u, \bfT, \bfx]$) the ideal generated by $f_{\bfu \to \bfT}(\cI_{\ktT(G)})$ (resp.\ $g_{\bfu \to \bfT}(\cI_F)$).

\begin{remark}\label{rem depg}
Note that  the ideals $\cJ_{\ktT(G)}$ and $\cJ_F$ depend on $m$ but we keep the notation simple and do not write additional subscript $m$.
\end{remark}

\begin{example}\label{ex zad exex}
From Example \ref{ex 2.7} we see that
 $$F_{\bfk_1}=x_4x_6-x_5u_2,~~~F_{\bfk_2}=x_3x_6-x_4u_2,~~~F_{\bfk_3}=x_3x_5-x_4^2,~~~F_{\bfk_4}=x_2x_5-x_4u_1,$$
$$F_{\bfk_5}=x_2x_4-x_3u_1,~~~F_{\bfk_6}=x_2x_6-u_4,~~~F_{\bfk_7}=x_1x_3-x_2u_5,~~~F_{\bfk_8}=x_1x_5-x_6u_3,~~~F_{\bfk_9}=x_1x_4-u_2u_3$$
and we obtain $f_{\bfk_i}$ from $F_{\bfk_i}$ if we write $u_i=u^{l_i}$.
\demo
\end{example}
For an integer $z\in \ZZ$ we define 
$$z^+:=\left\{
\begin{array}{cc}
z&\text{ if }z\geq 0\\
0&\text{otherwise},
\end{array}
\right.
z^-:=\left\{
\begin{array}{cc}
-z&\text{ if }z\leq 0\\
0&\text{otherwise}.
\end{array}
\right.
$$
It is clear that 
\begin{equation}\label{eq pc}
\cI_{\ktT(G)}=\Big(\prod_{i=1}^{n_G} u_i^\frac{d_i^+}{l_i}-\prod_{i=1}^{n_G}u^{\frac{d_i^-}{l_i}}_i~|~\un{d}\in \cT^*_\ZZ(G) \cap \cT(G)^\perp\Big)\subset \CC[\bfu]=\CC[u_1,\dots,u_{n_G}]
\end{equation}
with 
\begin{equation}\label{eq ctp perp}
\cT(G)^\perp=\span_\QQ\Big\{\Big(\delta_\eps(d^1)\lan d^1,c\ran,\dots,\delta_\eps(d^n)\lan d^n,c\ran\Big)~|~c\in M_\QQ, \eps~\text{a 2-face in }G\Big\}.
\end{equation}

Let $\un{d}\in \cT^*_\ZZ(G) \cap \cT(G)^\perp$ (as in the equation \eqref{eq pc}) and let
 \begin{equation}\label{pund}
 p_{\un{d}}(\bfu):=\prod_{i=1}^{n_G} u_i^\frac{d_i^+}{l_i}-\prod_{i=1}^nu^{\frac{d_i^-}{l_i}}_i\in \cI_{\ktT(G)}\subset \CC[\bfu].
 \end{equation}
 Since $p_{\un{d}}(\bfu)$ is homogeneous of degree $g_{\un{d}}R^*$, where 
 $g_{\un{d}}=\sum_{i=1}^{n_G}d^+_i=\sum_{i=1}^{n_G}d^-_i$, we can  write in a unique way
\begin{equation}\label{eq pol p}
f_{\bfu\to \bfT}(p_{\un{d}}(\bfu))=\sum_{j=1}^{g_{\un{d}}}\bfx_m^ju^{g_{\un{d}}-j}p^{(j)}_{\un{d}}(\bfT),
\end{equation}
where $p^{(j)}_{\un{d}}(\bfT)\in \CC[\bfT]$ are homogeneous of degree $jm\in \tM$. 
We define the ideal 
$$\cJ_\cB:=\lan p^{(j)}_{\un{d}}(\bfT)\mid \un{d}\in \cT^*_\ZZ(G) \cap \cT(G)^\perp,~~~j=1,\dots,g_{\un{d}}\ran\subset \CC[\bfT],$$ 
i.e.\ $\cJ_\cB$ is generated by the polynomials $p^{(j)}_{\un{d}}(\bfT)$ for all $\un{d}\in \cT^*_\ZZ(G) \cap \cT(G)^\perp$ and $j=1,\dots,g_{\un{d}}$. 

\begin{remark}
Note that the term $u^{g_{\un{d}}}$ gets cancelled in \eqref{eq pol p}.
\end{remark}

\begin{example}\label{ex house}
Let us continue our Example \ref{ex 2.7}. Using the notation from Subsection \ref{sub hom def} we take $m=R^*$ and thus we have $G=P$.
 We see that 
the ideal $\cI_{\ktT(P)}$ is in this case generated by 
$$\cI_{\ktT(P)}=\lan u_4-u_1u_2,~~~u_5u_1-u_2u_3\ran.$$
 The two generators are obtained from $$ \big(\lan d^1,(1,0)\ran,\dots,\lan d^5,(1,0)\ran\big),~~~~\big(\lan d^1,(0,1)\ran,\dots,\lan d^5,(0,1)\ran\big)\in \cT^*_\ZZ(P) \cap \cT(P)^\perp.$$ 
 Thus $\CC[\bfu]/\cI_{\ktT(P)}\cong \CC[u_1,u_2,u_3,u_5]/(u_5u_1-u_2u_3)$ and 
$f_{\bfu\to \bfT}(u_5u_1-u_2u_3)$ is
$$\big(u^2+T_{52}+uT_{51}\big)(u+T_{11})-(u+T_{21})\big(u^2+T_{32}+uT_{31}\big)=$$
$$=u^2\big(T_{11}+T_{51}-T_{21}-T_{31} \big)+u\big(T_{11}T_{51}+T_{52}-T_{32}-T_{21}T_{31} \big)+T_{11}T_{52}-T_{21}T_{32}.$$
Thus the ideal $\cJ_\cB$ of $\cB\subset \spec \CC[T_{11},T_{21},T_{31},T_{32},T_{51},T_{52}]$ is given by 
\begin{equation}\label{eq houe ex equ 2}
\cJ_\cB=(T_{11}+T_{51}-T_{21}-T_{31},T_{11}T_{51}+T_{52}-T_{32}-T_{21}T_{31},T_{11}T_{52}-T_{21}T_{32}).
\end{equation}
\end{example}

\subsection{The proof of flatness}
Recall the ideals $\cJ_\cB$ and $\cJ_F$.
\begin{theorem}\label{th flat x map}
The map 
$$\pi_2:\spec \CC[u,\bfT,\bfx]/(\cJ_\cB+\cJ_F)\to \spec \CC[\bfT]/\cJ_\cB$$ is flat.
\end{theorem}
\begin{proof}
Using \eqref{eq gbfk} and \eqref{eq fbfk} we see that 
\begin{equation}\label{eq fbfk flat 2}
F_{\bfk}(u,\bfT,\bfx):=g_{\bfu\to \bfT}\left(F_\bfk(\bfu,\bfx)\right)=\bfx^\bfk-\bfx^{\bo(\bfk)}u^{\eta_P(\bfk)-\eta_G(\bfk)}\prod_{i=1}^{n_G}\Big(u^{l_i}+\sum_{j=1}^{l_i}\Big(\bfx^j_mu^{l_i-j}T_{ij}\Big)\Big)^{z_i(\bfk)}.
\end{equation}
By \eqref{eq uiuli} we see that $F_\bfk(u,\bfT,\bfx)$ is a lift of $f_{\bfk}(u,\bfx)$, which means $F_{\bfk}(u,0,\bfx)=f_\bfk(u,\bfx)$.  
We are going to prove flatness by the lifting relations $R_{\bfa,\bfk}=f_{\bfa+\bfk} - \bfx^\bfa f_\bfk - u^{\eta_P(\bfk)}f_{\bo(\bfk)+\bfa}$ from Lemma \ref{lem 29}. Let 
$\widetilde{R}_{\bfa,\bfk}:=g_{\bfu\to \bfT}\left(F_{\bfa+\bfk}(\bfu,\bfx) - \bfx^\bfa F_\bfk(\bfu,\bfx) - \bfu^{\te_G(\bfk)}F_{\bo(\bfk)+\bfa}(\bfu,\bfx)\right)$ and as before we see that $\widetilde{R}_{\bfa,\bfk}$ is a lift of $R_{\bfa,\bfk}$.

We will show that $\widetilde{R}_{\bfa,\bfk}$ is a linear relation between $F_{\bfk}(u,\bfT,\bfx)$. We compute
\begin{equation}\label{flat eq}
\widetilde{R}_{\bfa,\bfk}=g_{\bfu\to \bfT}\left(-\bfx^{\bo(\bfa+\bfk)}\bfu^{\te_G(\bfa+\bfk)}+ \bfu^{\te_G(\bfk)}\bfx^{\bo(\bfa+\bo(\bfk))}\bfu^{\te_G(\bo(\bfk)+\bfa)}\right).
\end{equation}
Immediately by definition we see that $\bo(\bfa+\bo(\bfk))=\bo(\bfa+\bfk)$ and thus $\bfx^{\bo(\bfa+\bo(\bfk))}=\bfx^{\bo(\bfa+\bfk)}$. In the following we are going to prove that 
\begin{equation}\label{eq ktt rel}
\te_G(\bfa+\bo(\bfk))+\te_G(\bfk)=\te_G(\bfa+\bfk)\in \ktT(G).
\end{equation}
Let us write $\partial(\bfk)=(b_1,\dots,b_r)\in \NN^r$ and thus using $\bo(\bfa+\bo(\bfk))=\bo(\bfa+\bfk)$ we get 
$$\te_G(\bfa+\bfk)-(\te_G(\bfa+\bo(\bfk))+\te_G(\bfk))=\te_G\left(\sum_{i=1}^rk_ic_i\right)-\sum_{i=1}^r\te_G(b_ic_i)=0\in \ktT(G),$$
where the latter equality holds because 
 by definition $b_i=0$ if $v(\sum_{j=1}^rk_jc_j)\ne v(c_i)$ and moreover we have $\sum_{j=1}^rk_jc_j=\sum_{j=1}^rb_jc_j$.
Thus \eqref{eq ktt rel} holds and by applying the degree map, cf.\ Definition \ref{def map pi}, to \eqref{eq ktt rel} we also see that $$\eta_G(\bfa+\bo(\bfk))+\eta_G(\bfk)=\eta_G(\bfa+\bfk)\in \NN,$$ 
for any $G\subset P$ (including $G=P$).
This implies that
$$\bfu^{\te_G(\bo(\bfk)+\bfa)+\te_G(\bfk)}-\bfu^{\te_G(\bfa+\bfk)}\in \cI_{\ktT(G)}\subset \CC[\bfu].$$
As in \eqref{eq pol p} we thus see that \eqref{flat eq} can be in a unique way written as 
\begin{equation}\label{eq obs flat}
\widetilde{R}_{\bfa,\bfk}= \bfx^{\bo(\bfa+\bfk)}u^{\eta_P(\bfa+\bfk)-\eta_G(\bfa+\bfk)}\left(\sum_{j=1}^{\eta_G(\bfa+\bfk)}\bfx_m^ju^{{\eta_G(\bfa+\bfk)}-j}p_{\bfa,\bfk}^{(j)}(\bfT)\right),
 \end{equation}
where $p^{(j)}_{\bfa,\bfk}\in \cJ_\cB\subset \CC[\bfT]$  are homogeneous of degree $jm\in \tM$. 
Thus $\widetilde{R}_{\bfa,\bfk}$ is indeed a linear relation, which finishes the proof by the well known flatness criterion, see e.g.\ \cite[Section 1]{ste}.
\end{proof}

Let us consider the diagram
\begin{equation}\label{eq def diagram}
\begindc{\commdiag}[500]
\obj(0,1)[01]{$\spec \CC[u,\bfT,\bfx]/(\cJ_\cB+\cJ_F,u)$}
\obj(3,0)[10]{$\spec \CC[\bfT]/\cJ_\cB$}
\obj(6,1)[21]{$\spec \CC[u,\bfT,\bfx]/(\cJ_\cB+\cJ_{F})$}
\mor{01}{10}{$\pi_1$}[\atright,\solidarrow]
\mor{01}{21}{$i$}[\atleft, \injectionarrow]
\mor{21}{10}{$\pi_2$}
\enddc
\end{equation}
where the maps $\pi_i$ are defined by $\bfT\mapsto \bfT$. We denote $\cX:=\spec \CC[u,\bfT,\bfx]/(\cJ_\cB+\cJ_{F})$ and $\cB:=\spec \CC[\bfT]/\cJ_\cB$ and thus we have the flat map $\pi_2:\cX\to \cB$ that we are going to analyse in more detail in the upcoming sections.

\begin{theorem}
The above diagram is a deformation of $(X,\partial X)$.
\end{theorem}
\begin{proof}
The fibers over $0$ are $\pi_1^{-1}(0)\cong \partial X$ and $\pi_2^{-1}(0)\cong X$ and we have already proved that $\pi_2$ is flat. Thus also $\pi_1$ is flat (see e.g.\ \cite[Lemma 3.10]{pet2}). Note that we could also prove that $\pi_1$ is flat directly by lifting the relations (the computations are the same as for proving that $\pi_2$ is flat modulo $u$).
\end{proof}

\section{The Kodaira--Spencer map}\label{kod spe sec}

In this section we are going to prove the following theorem.

\begin{theorem}\label{th kodspe}
The Kodaira--Spencer map $T_0\cB\to \bigoplus_{k\in \NN}T^1_{(X,\partial X)}(-km)\subset T^1_{(X,\partial X)}$ of the deformation \eqref{eq def diagram} is bijective and  the Kodaira--Spencer map $T_0\cB\to \bigoplus_{k\in \NN}T^1_{X}(-km)\subset T^1_{X}$ of the map $\pi_2$ in \eqref{eq def diagram} is surjective.
\end{theorem}

\subsection{The tangent space $T^1_{\paar}$ and the obstruction space $T^2_{\paar}$}\label{the tang and obs}

Since $\partial X\hookrightarrow X$ is a regular embedding we 
can use results in \cite{cili} to describe $T^1_{\paar}$.
Let us denote $A=\CC[u,\bfx]/\cI_S$ and thus $X=\spec \CC[u,\bfx]/\cI_S=\spec A$. We know that $\partial X=\spec \CC[u,\bfx]/(\cI_S,u)=\spec A'$ with $A':=A/(u)$ and $\partial X\hookrightarrow X \hookrightarrow \CC^{r+1}$. 

We have the following exact sequence (see e.g.\ \cite[Equation 11]{cili}):
\begin{equation}\label{eq varphi1}
0\to T_{\partial X}\to T_{X|\partial X}\xrightarrow{\varphi} N_{\partial X|X}\xrightarrow{\varphi_1} T^1_{(X,\partial X)}\to T^1_X\to H^1(\cN_{\partial X|X})\to \cdots
\end{equation}
where $T_{\partial X}=\Der_\CC(A',A')$ are derivations from $A'=A/(u)$ to $A'$, $T_{X|\partial X}=\Der_\CC(A,A)\otimes A'$
 and $N_{\partial X|X}=\Hom_{A'}((u)/(u)^2,A')$. Recall the set $B$ from \eqref{eq minb}.

\begin{proposition}\label{t1t2 par}
For $r\in B$, we have $\dim_\CC T^1_{\paar}(-r)=1+\dim_\CC T^1_X(-r)$. Moreover, it holds that $T^2_{X}\cong T^2_{\paar}$ and $T^1_{\paar}\cong T^1_X\bigoplus \operatorname{Im}(\varphi_1)$. 
\end{proposition}
\begin{proof}
Note that as $\partial X\hookrightarrow X$ is a regular embedding, $\cN_{\partial X|X}$ is a line bundle on the (affine) $X$. Hence, $H^i(\cN_{\partial X|X})=0$ for $i>0$. Thus, $T^2_{X}\cong T^2_{\paar}$ and $T^1_{\paar}\cong T^1_X\bigoplus \operatorname{Im}(\varphi_1)$. For any $r = R^* - s \in B$, we observe that the element $u \mapsto \chi^s$ in $N_{\partial X \mid X}$ does not lie in the image of $\varphi$, from which it follows that $\dim_\CC T^1_{\paar}(-r) = 1 + \dim_\CC T^1_X(-r)$.
\end{proof}

\begin{remark}\label{rem kon}
Note that any deformation of the (affine) $X$ induces also a deformation of $\partial X$ by looking modulo $u$ (see e.g.\ \cite[Lemma 3.10]{pet2}). 
\end{remark}

\subsection{The dimension of $\cB$}\label{sub sec of b}
Recall that $\cJ_\cB$ is generated by the polynomials $p^{(j)}_{\un{d}}(\bfT)$, appearing in \eqref{eq pol p}. 
In particular, we see that the tangent space $T_0\cB$ of $\cB=\spec \CC[\bfT]/\cJ_\cB\subset \spec \CC[\bfT]$ at $0$ is  
\begin{equation}\label{eq tang at b}
\Big\{(T_{11},\dots,T_{1l_1},\dots,T_{n_G1},\dots,T_{n_Gl_{n_G}})\in \CC^{\sum_{i=1}^{n_G}l_i}~|~\sum_{d^i;l_i\geq j}\frac{\delta_\eps(d^i)}{l_i}d^iT_{ij}=0,\text{ for }j\in \NN\text{ and }2\text{-face }\eps\text{ in }G\Big\}.
\end{equation}
Indeed, $f_{\bfu\to \bfT}(p_{\un{d}}(\bfu))$ is modulo $(\bfT)^2$ equal to $\sum_{j=1}^{g_{\un{d}}}\sum_{d^i;l_i\geq j}\frac{\delta_\eps(d^i)}{l_i}\lan d^i,c\ran \bfx_m^ju^{g_{\un{d}}-j}T_{ij}$. 

Recall by Remark \ref{rem depg} that $\cB$ depends on $m$.
\begin{proposition}\label{prop 32}
For $m\in B\subset \tM$ we have 
$\dim_\CC T_0\cB=\dim_\CC \bigoplus_{k\in \NN}T^1_{(X,\partial X)}(-km)$.
\end{proposition}
\begin{proof}
For $j\in \NN$ we denote 
\begin{equation}\label{eq t0b}
T_0\cB(j):=\{(T_{1j},\dots,T_{n_Gj})\in \CC^{n_G}~|~ T_{ij}=0\text{ if }l_i<j,~\sum_{d^i;l_i\geq j}\frac{\delta_\eps(d^i)}{l_i}d^iT_{ij}=0,\text{ for each }2\text{-face }\eps\text{ in }G\},
\end{equation}
where $G=(R^*=1)\cap (m=1)\cap \sigma$ as before. By a well-studied description of $T^1_X$ (see \cite[Theorem~4.1]{ka-flip}, where it is described precisely in terms of the vector space appearing in \eqref{eq t0b}, denoted by $V'_\CC(jm)$ in that paper) and Proposition~\ref{t1t2 par}, we immediately see
 that for $j\geq 2$ we have $$\dim_\CC T_0\cB(j)=\dim_\CC T^1_X(-jm)=\dim_\CC T^1_{\paar}(-jm)$$ and 
 \begin{equation}\label{eq t0cb}
 \dim_\CC T_0\cB(1)=\dim_\CC T^1_X(-m)+1=\dim_\CC T^1_{\paar}(-m).
 \end{equation}
  From the equation \eqref{eq tang at b} we have $T_0\cB=\bigoplus_{j\in \NN}T_0\cB(j)$, from which the proof follows.
\end{proof}

\begin{example}\label{ex 45}
In our Example \ref{ex house pop} we have 
$$\dim_\CC T^1_{\paar}(-R^*)=3,~~~\dim_\CC T^1_{\paar}(-2R^*)=1,~~~\dim_\CC T^1_{\paar}(-kR^*)=0,~~~\text{for }k\geq 3.$$
\end{example}

\begin{remark}
Note that with Proposition \ref{prop 32} we see that our choice of $u_i$ in \eqref{eq ui new 2} was natural since we obtain the right dimension of the tangent space and we also see why $m$ needs to lie in $B$ since otherwise we cannot apply formulas for computing $T^1_X$ and Proposition \ref{t1t2 par} to get the right dimension of the tangent space. For example, we could also define $f_{\bfu\to \bfT}(u_i)=u^{l_i}+\bfx_mT_{il_i}$ but if $l_i>1$ we only get a strict subset of $\dim_\CC \bigoplus_{k\in \NN}T^1_{(X,\partial X)}(-km)$ for the tangent space. If $l_i=1$ for all $i$ and $m=R^*$, then $f_{\bfu\to \bfT}(u_i)=u+T_{i1}$ so we are in the case of \cite{alt} by Altmann. Here we see that it is more natural to consider deformations of $\paar$ due to \eqref{eq t0cb}, which was also mentioned in the introduction. Now that we naturally obtain a flat family with the right dimension of its tangent space, the goal is to prove that this family is in fact miniversal in degrees $-km$, $k\in \NN$, by proving bijectivity of the Kodaira--Spencer map and surjectivity of the obstruction map.
\end{remark}

\subsection{The Kodaira--Spencer map}

We refer the reader to \cite[Section 10]{de jong} for the definition and the construction  of a Kodaira--Spencer map.
In the following we will construct  the Kodaira--Spencer map $T_0\cB\to \bigoplus_{k\in \NN}T^1_{X}(-km)$ of the map $\pi_2$, cf.\ \eqref{eq def diagram}.
As before let us write $A:=\CC[u,x_1,\dots,x_r]/\cI_S$. The following exact sequence is well known:
\begin{equation}\label{eq dercaa}
0\to \Der_\CC(A,A)\to A^{r+1}\xrightarrow{\xi} \Hom_{A}(\faktor{\cI_S}{\cI^2_S},A)\xrightarrow{} T^1_X=\operatorname{coker}(\xi)\to 0,
\end{equation}
where the map $\xi$ maps an element $(h,h_1,\dots,h_{r})\in A^{r+1}$ to 
$$
\bar{f}\mapsto h\frac{\partial f}{\partial u}+\sum_{i=1}^{r}h_i\frac{\partial f}{\partial x_i}\in \Hom_{A}(\faktor{\cI_S}{\cI^2_S},A).
$$
Computing $F_\bfk(u,\bfT,\bfx)$ from \eqref{eq fbfk flat 2} modulo $(\bfT)^2$ gives us  
$$F_\bfk(u,\bfT,\bfx)=f_\bfk(u,\bfx)+\sum_{i=1}^{n_G}\sum_{j=1}^{l_i}z_i(\bfk)T_{ij}\bfx^j_m\cdot \bfx^{\bo(\bfk)}u^{\eta_P(\bfk)-j}\in \CC[u,\bfT,\bfx]/(\bfT)^2.$$
 Thus the Kodaira--Spencer map of the flat map $\pi_2$ is given by 
$T_0\cB\xrightarrow{K_2}  T^1_X,$
where $$K_2(\bfT)=\Big(f_\bfk\mapsto \sum_{i=1}^{n_G}\sum_{j=1}^{l_i}z_i(\bfk)T_{ij}\bfx^j_m\cdot \bfx^{\bo(\bfk)}u^{\eta_P(\bfk)-j}\in A\Big)\in T^1_X$$ and we look on $T^1_X$ as a cokernel of the map $\xi$, cf.\ \eqref{eq dercaa}.
Now the image of $\xi$ in degree $-m$ is one-dimensional  and the image of $\xi$ in degrees $-km$, for $k\geq 2$, equals zero.
By restricting the codomain to $\bigoplus_{k\in \NN}T^1_X(-km)\subset T^1_X$ we see that $K_2$ is surjective and has one-dimensional kernel. This one-dimensional kernel of $T_0\cB$ induce one-parameter deformation of $\paar$ that non-trivially deforms $\partial X$ and trivially deforms $X$.  
Its image under the Kodaira--Spencer map $K:T_0\cB\to \bigoplus_{k\in \NN}T^1_{(X,\partial X)}(-km)\subset T^1_{(X,\partial X)}$ equals $\operatorname{im(\varphi_1)}$, cf.\ Proposition \ref{t1t2 par}.
Using Proposition \ref{prop 32} we thus proved Theorem \ref{th kodspe}.

\section{The miniversal deformation}\label{sec obs}

\subsection{The obstruction map}
In the following we are going to show that the map $\pi_2$ (appearing in the deformation diagram \eqref{eq def diagram}) is surjective. This implies that the deformation \eqref{eq def diagram} of $\paar$ is maximal with the prescribed tangent space $\bigoplus_{k\in \NN}T^1_{\paar}(-km)$, i.e.\ we can not extend it to a deformation of $\paar$ with a larger base space (by keeping the tangent space fixed). In this case we also say the deformation diagram \eqref{eq def diagram} is \emph{miniversal in degrees $-km$} for all $k\in \NN$. 

\begin{remark}
The justification for using the term miniversal in degrees $-km$ is that if the miniversal deformation of $\paar$ exists (which is the case if $X$ is three-dimensional, see \cite{mojcor}), then we get the miniversal deformation in degrees $-km$, $k\in \NN$, by restricting it to only those variables coming from $\bigoplus_{k\in \NN}T^1_{\paar}(-km)$. In particular, we obtain the miniversal deformation in degree $-km$ (for some $k \in \NN$) by restricting to only those variables coming from $T^1_{(X, \partial X)}(-km)$.
\end{remark}

We briefly recall the definition of the obstruction map from \cite[Section 10]{de jong}. 
 Let $\cR$ be the module of linear relations between the equations $f_\bfk\in \cI_\kS$ defining $X=\spec A$.
 The module $\cR$ contains the submodule $\cR_0$ of the so-called Koszul relations and we have 
\begin{equation}\label{def t2 prva}
T^2_X:=\frac{\Hom(\cR/\cR_0,\,A)}{\Hom(\bigoplus_{\bfk\in \NN^r}\CC[\bfx,u]f_\bfk,\,A)}.
\end{equation}

Since we will no longer use the total number of edges of $P$, 
we denote by $n:=n_G$ the total number of edges of $G$ for simplicity.
From \eqref{pund} recall  $p_{\un{d}}(\bfu)\in \cI_{\ktT(G)}\subset \CC[\bfu]$ with 
\begin{equation}\label{eq undund}
\un{d}=\un{d}_{c,\eps}=\Big(\delta_\eps(d^1)\lan d^1,c\ran,\dots,\delta_\eps(d^{n_G})\lan d^{n_G},c\ran\Big)\in \cT^*_\ZZ(G) \cap \cT(G)^\perp
\end{equation}
for some $c\in M_\QQ$ and some $2$-face $\eps$ in $G$.
Recall also that $\cJ_\cB$ is generated by $p^{(j)}_{\un{d}}(\bfT)$, cf.\ \eqref{eq pol p}.
We consider the ideal
 $$\widetilde{\cJ_\cB}:=\cJ_\cB\cdot (\bfT)+\cJ'_\cB\CC[\bfT]\subset \CC[\bfT],$$
where $$\cJ'_\cB:=\big(p^{(k)}_{\un{d}}(\bfT)~|~p^{(k)}_{\un{d}}(\bfT)\text{ contains a monomial $aT_{ij}$ for some $a\in \CC\setminus \{0\}$ and $i,j\in \NN$}\big)$$ denotes the ideal generated by only those $p^{(k)}_{\un{d}}(\bfT)$ that contain a monomial $aT_{ij}$. 
We define a $\ZZ$-graded vector space $W:=\cJ_\cB/\widetilde{\cJ_\cB}$ with $W=\bigoplus_{k\in \NN}W_k$. 

\begin{remark}
If all edges appearing in $G=P$ have  lattice length $1$, then $\cJ'_\cB$ is generated by degree $R^*$ elements in $\cJ_\cB$.  
\end{remark}

\begin{example}
In our example we computed the generators of the ideal $\cJ_\cB$ in \eqref{eq houe ex equ 2}. The ideal $\cJ'_\cB$ is in this case equal to $(T_{11}+T_{51}-T_{21}-T_{31},T_{11}T_{51}+T_{52}-T_{32}-T_{21}T_{31})$.\demo
\end{example}

From \eqref{eq obs flat} recall $\widetilde{R}_{\bfa,\bfk}$, 
where $p^{(j)}_{\bfa,\bfk}(\bfT)\in \cJ_\cB$ are defined in \eqref{eq pol p}.  Recall $R_{\bfa,\bfk}$ from Lemma \ref{lem 29}. 
Let  $o\in \Hom(\cR/\cR_0,A\otimes W)$ be defined by
\begin{equation}\label{eq obs eq}
o(R_{\bfa,\bfk})=\bfx^{\bo(\bfa+\bfk)}u^{\eta_P(\bfa+\bfk)-\eta_G(\bfa+\bfk)}\Big(\sum_{k=1}^{\eta_G(\bfa+\bfk)}\bfx_m^ku^{{\eta_G(\bfa+\bfk)}-k}p^{(k)}_{\bfa,\bfk}(\bfT)\Big)\in A\otimes W.
\end{equation}
It holds that 
$$o\in \Hom(\cR/\cR_0,A\otimes W)=\Hom(\cR/\cR_0,A)\otimes W=T^2_X\otimes W=\Hom((T^2_X)^*,W)$$
and $o:(T^2_X)^*\to W$ is called the \emph{obstruction map} of the map $\pi_2$.

\subsection{Toric description of the obstruction map}

The following definitions already appeared in \cite[Section 6]{alt}. 
Recall the Hilbert basis $E$ of $S=\sigma^\vee\cap \tM$ from the equation \eqref{eg hilbbas} and for $R\in \tM$ we consider
\[
  E_{a_i}^R:=E_i^R:=\{e\in E~|~\lan a^i,e \ran<\lan a^i,R \ran\}.
\]
For a subface $\tau$ of $\sigma$ (denoted $\tau\leq \sigma$) let $E^R_\tau:=\bigcap_{a^i\in \tau}E^R_{i}$. The $\ZZ$-module of all linear relations among elements in $E^R_\tau$ we denote by $L(E^R_\tau)$.  

\begin{proposition}\label{prop }
\begin{equation}\label{eq t2prva}
  T^2_X(-R)^*\cong \Bigg(\frac{\ker\Big( \bigoplus_{i}L_\CC(E^R_i)\to L_\CC(E)\Big)}{\operatorname{image}\Big(\bigoplus_{\lan a^i,a^k\ran\leq \sigma}L_\CC(E_i^R\cap E_k^R)\to \bigoplus_{i}L_\CC(E^R_i)\Big)} \Bigg).
\end{equation}
\end{proposition}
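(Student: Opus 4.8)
The plan is to follow Altmann's combinatorial computation of the second cotangent cohomology of an affine toric variety \cite{alt}; the formula stated here is that computation specialised to our $X=\Spec\CC[S]$.

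First I would fix the presentation $\psi\colon\CC[x_e\mid e\in E]\twoheadrightarrow A=\CC[S]$, $x_e\mapsto\chi^e$, whose kernel is the toric ideal $\cI_X$ generated by the binomials $\bfx^{\bfa}-\bfx^{\bfb}$ with $\sum_e a_e e=\sum_e b_e e$ in $S$. Choosing a finite generating set of $\cI_X$, a finite generating set of its first syzygies, and the submodule $F_2^{\mathrm{triv}}$ of Koszul (trivial) syzygies gives the start $F_2\xrightarrow{d_2}F_1\xrightarrow{d_1}\CC[x_e]$ of a free resolution, and the Lichtenbaum--Schlessinger construction then presents $T^2_X$ as a cokernel built from $\Hom_A(F_1,A)$, $\Hom_A(F_2,A)$ and $F_2^{\mathrm{triv}}$. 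Since the torus acts, $A$ and all these modules carry a $\tilde{M}$-grading with homogeneous differentials, so $T^2_X=\bigoplus_R T^2_X(-R)$ and it is enough to identify each graded piece.

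The core of the argument is the combinatorial identification of the degree $-R$ parts. The elementary fact to exploit is that, for $e\in E$, the monomial $\chi^{R-e}$ lies in $A$ precisely when $\lan a^i,e\ran\le\lan a^i,R\ran$ for every ray $a^i$ of $\sigma$, i.e.\ precisely when $e\notin E^R_i$ for all $i$; restricting to the affine chart of a face $\tau\le\sigma$ one only tests the rays $a^i\le\tau$, so on that chart the degree $-R$ part is governed by the relations among the cut-off set $E^R_\tau$, that is, by $L_\CC(E^R_\tau)$. Unwinding $F_1$ (generators $\leftrightarrow$ relations between elements of $E$) and $F_2^{\mathrm{triv}}$ together with the orbit/face decomposition of $X$, the degree $-R$ part of the Lichtenbaum--Schlessinger complex becomes a finite-dimensional complex of $\CC$-vector spaces whose terms are the $L_\CC(E^R_\tau)$ for faces $\tau$ of dimension $0,1,2$ --- the $0$-face giving $E^R_{\{0\}}=E$, the rays giving the $E^R_i$, the $2$-faces $\lan a^i,a^k\ran$ giving $E^R_i\cap E^R_k$ --- with boundary maps the natural corestriction maps $L_\CC(E^R_\tau)\to L_\CC(E^R_{\tau'})$ for $\tau'\le\tau$. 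Because only finitely many $e\in E$ are cut off by a fixed $R$, all these spaces are finite-dimensional, so I would dualise: the cokernel computing $T^2_X(-R)$ turns into the kernel in the dual complex, which is exactly the displayed fraction, with numerator $\ker\big(\oplus_i L_\CC(E^R_i)\to L_\CC(E)\big)$ and denominator $\operatorname{image}\big(\oplus_{\lan a^i,a^k\ran\le\sigma}L_\CC(E^R_i\cap E^R_k)\to\oplus_i L_\CC(E^R_i)\big)$.

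The step I expect to be the main obstacle is this last combinatorial bookkeeping: matching the two differentials of the Lichtenbaum--Schlessinger complex --- both given by multiplication with the monomials $\chi^e$ appearing in the binomial generators and syzygies --- with the corestriction maps among the $L_\CC(E^R_\tau)$, checking that the Koszul syzygies account precisely for the $2$-face term, and proving the vanishing that guarantees faces of dimension $\ge 3$ contribute nothing to $T^2$ in degree $-R$. Since all of this is carried out in \cite[Sections~6--7]{alt}, and the closely analogous free-pair computation in \cite[Section~7.3]{m}, I would invoke those results after setting up the dictionary above rather than repeat the routine verifications.
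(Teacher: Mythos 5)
Your sketch correctly reconstructs the combinatorial computation behind this formula, so the approach is essentially the same as the paper's; the only difference is that the paper's proof consists of a one-line citation to Altmann and Sletsj{\o}e \cite[Propositions~5.4, 5.5]{klaus}, which is the direct source for exactly this expression, whereas you point to \cite{alt} and \cite[Section~7.3]{m} for the routine verifications.
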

\begin{proof}
See \cite[Propositions 5.4, 5.5]{klaus}.
\end{proof}

By $v_*$ we denote the vertex $0$ of $G=P\cap (m=1)\subset P$ for some $m\in B$, cf.\ \eqref{eq minb}.
From now on we will work only with $G$ (instead of $P$), and for simplicity 
we write $n:=n_G$ for the number of edges of $G$ and $v(c):=v_G(c)$ for $c\in M$.
The following we recall from \cite[Definition 3.6]{m}.  

\begin{definition}\label{def path}
Let $E_1,\dots,E_{n}$ be the edges of $G$, oriented by direction vectors $d^1,\dots,d^{n}$.  
For a path $p=p_{w^1 \leadsto w^k}$ along the edges of $G$ we define its \emph{edge-count vector}
\[
\#(p) := (\nu_1(p),\dots,\nu_{n}(p)) \in \ZZ^{n},
\]
where $\nu_i(p)$ is the signed number of times the path $p$ traverses the edge $E_i$ 
(with orientation given by $d^i$). Thus $\#(p)$ records, for each edge, how often 
and in which direction the path passes through it.

For $a,c\in M$ we set
\[
  \underline\lambda(a) := \#\big(p_{v_* \leadsto v(a)}\big), \qquad
  \underline\mu^c(a) := \#\big(p_{v(a) \leadsto v(c)}\big),
\]
where in the second case the path is chosen such that
$\mu^c_i(a)\,\langle c, d^i\rangle \leq 0$ for all $d^i$. Finally we define
\[
  \underline\lambda^c(a) := \underline\lambda(a)+\underline\mu^c(a).
\]
\end{definition}

Recall $p_{\un{d}}(\bfu)$ from \eqref{pund} and $p^{(k)}_{\un{d}}(\bfT)$ from \eqref{eq pol p}. 
As in \eqref{eq undund}, for any 
$\underline{\mu}=(\mu_1,\dots,\mu_n)\in \ZZ^n$ satisfying 
$
\sum_{i=1}^n \mu_i d^i = 0,
$
we define
\begin{equation}\label{eq unkd}
\un{d}(\un{\mu},c):=(\lan \mu_1d^1,c\ran,\dots,\lan \mu_nd^n,c\ran)\in\cT^*_\ZZ(G) \cap \cT(G)^\perp\text,~~~p(\un{\mu},c):=p_{\un{d}(\un{\mu},c)}(\bfu), ~~~p^{(k)}(\un{\mu},c):=p^{(k)}_{\un{d}(\un{\mu},c)}(\bfT),
\end{equation}
where $p^{(k)}_{\un{d}(\un{\mu},c)}(\bfT)$ is homogeneous of degree $km$, cf.\ \eqref{eq pol p}.
We define the map
\[
  \psi_i^{(k)}:L_\CC(E^{km}_{a^i})\to W_k,
\]
\[
  \un{q}\mapsto \sum_{j=1}^rq_jp^{(k)}(\un{\lam}^{c_j}(v^i)-\un{\lam}(v(c_j)),c_j).
\]

\begin{lemma}\label{bil lem}
$p^{(k)}(\un{\mu},c)\in W_k$ is a bilinear map:
$$p^{(k)}(\un{\mu_1}+\un{\mu_2},c)=p^{(k)}(\un{\mu_1},c)+p^{(k)}(\un{\mu_2},c)\in W_k\text{  and  }p^{(k)}(\un{\mu},c_1+c_2)=p^{(k)}(\un{\mu},c_1)+p^{(k)}(\un{\mu},c_2)\in W_k.$$ 
\end{lemma}
\begin{proof}
Straightforward computation shows that
$$\frac{1}{2}p_{\un{d}}(\bfu)\big(\prod_{i=1}^n u_i^\frac{d_i^+}{l_i}+\prod_{i=1}^nu^{\frac{d_i^-}{l_i}}_i\big)+\frac{1}{2}p_{\un{e}}(\bfu)\big(\prod_{i=1}^n u_i^\frac{e_i^+}{l_i}+\prod_{i=1}^nu^{\frac{e_i^-}{l_i}}_i\big)=\prod_{i=1}^n u_i^\frac{d_i^+}{l_i}\prod_{i=1}^n u_i^\frac{e_i^+}{l_i}-\prod_{i=1}^n u_i^\frac{d_i^-}{l_i}\prod_{i=1}^n u_i^\frac{e_i^-}{l_i}=$$
$$p_{\un{d}+\un{e}}(\bfu)\prod_{i\in S_1}u_i^{\frac{e_i^-}{l_i}}\prod_{i\in S_2}u_i^{\frac{d_i^-}{l_i}}\prod_{i\in S_3}u_i^{\frac{d_i^+}{l_i}}\prod_{i\in S_4}u_i^{\frac{e_i^+}{l_i}},$$
where 
$$S_1=\{i\in \{1,\dots,n\}~|~d_i>0,~e_i<0,~d_i+e_i>0\},~~S_2=\{i\in \{1,\dots,n\}~|~d_i<0,~e_i>0,~d_i+e_i>0\},$$
$$S_3=\{i\in \{1,\dots,n\}~|~d_i>0,~e_i<0,~d_i+e_i<0\},~~S_4=\{i\in \{1,\dots,n\}~|~d_i<0,~e_i>0,~d_i+e_i<0\}.$$
Now our claim easily follows because $W$ is a quotient space $W=\cJ_\cB/\widetilde{\cJ_\cB}$.
\end{proof}

\begin{proposition}\label{th obs map}
$\psi_i^{(k)}$ induce the linear map $\psi^{(k)}: T^2_X(-km)^*\to W_k$ and the map $$\psi=\sum_{k\in \NN}\psi^{(k)}: \bigoplus_{k\in \NN}T^2_X(-km)^*\to W$$ is the obstruction map of the flat map $\pi_2$.
\end{proposition}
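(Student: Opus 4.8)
The plan is to follow the construction of the obstruction map carried out in \cite[Section 7.1]{alt} and \cite[Section 7.3]{m}, adapting the combinatorics to the fact that here all the degrees $-kR^*$, $k\in\NN$, are treated simultaneously. Recall the general recipe from \cite[Section 4]{de jong}: present $A=\CC[S]$ as $\CC[x_1,\dots,x_r,t]/\cI_X$ with $\cI_X=(f_\bfk~|~\bfk\in\NN^r)$, and take as lifts of the $f_\bfk$ the binomials $F_\bfk(\bfx,t,\bfT)$ from Section \ref{sec kod-sp} pushed to $\CC[\bfT_b]/\cI_b$ by $h$; by construction they generate the ideal of the family \eqref{eq flat family} over $\CC[\bfT_b]/\cI_b$. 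For each generating syzygy $\sum_\bfk r_\bfk f_\bfk=0$ of $\cI_X$ one lifts the coefficients to $\CC[\bfx,t,\bfT_b]$; since $F_\bfk\equiv f_\bfk\pmod{(\bfT_b)}$ and \eqref{eq flat family} is flat, the resulting element $\sum_\bfk\tilde r_\bfk F_\bfk$ lies, modulo a combination of the $F_\bfk$, in $\cI_b\cdot\CC[\bfx,t,\bfT_b]$, and its class in $W=\cI_b/\cJ_b$, paired with the syzygy, is the obstruction. By Proposition \ref{prop } (see \eqref{eq t2prva}) the syzygies relevant to the degree $-kR^*$ piece are encoded, up to the images of the two-dimensional faces, by the linear relation modules $L_\CC(E^{kR^*}_{a^i})$.

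The heart of the argument is to make this class explicit. First I would recall the classical dictionary (\cite[Section 6]{alt}, \cite[Propositions 5.4, 5.5]{klaus}) that turns a linear relation $\un q\in L_\CC(E^{kR^*}_{a^i})$ into a binomial syzygy of $\cI_X$ attached to the ray $a^i$. Lifting that syzygy to a relation among the $F_\bfk$ and using that $F_\bfk$ lifts $f_\bfk$, its defect lies in $\cI_\ktT=\ker(\CC[\bfu]\surj\ktT)$; performing the same path-tracking as in the single-degree computation of \cite[Section 7.1]{alt}, with the paths $\un\lam^{c_j}(v^i)$ and $\un\lam(v(c_j))$ of Definition \ref{def paths}, this defect equals $\sum_{j=1}^r q_j\,p(\un\lam^{c_j}(v^i)-\un\lam(v(c_j)),c_j)$ in terms of the generators $p(\un\mu,c)$ of $\cI_\ktT$ from \eqref{eq punmu}. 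Substituting \eqref{eq ui} decomposes each $p(\un\mu,c)$ into its $\bfT$-homogeneous components $p^{(k)}_{\un{d}(\un{\mu},c)}(\bfT)$ as in \eqref{eq pol p}; extracting the degree-$k$ component, applying $h$ and reducing modulo $\cJ_b$ gives exactly $\sum_{j=1}^r q_j\,p^{(k)}(\un\lam^{c_j}(v^i)-\un\lam(v(c_j)),c_j)=\psi_i^{(k)}(\un q)\in W_k$. This identifies $\psi=\sum_k\psi^{(k)}$ with the adjoint of the obstruction map, provided it is well defined.

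So it remains to check that the $\psi_i^{(k)}$ descend to a linear map $\psi^{(k)}:T^2_X(-kR^*)^*\to W_k$, i.e.\ that $\oplus_i\psi_i^{(k)}$ annihilates the image of $\oplus_{\lan a^i,a^k\ran\leq\sigma}L_\CC(E^{kR^*}_i\cap E^{kR^*}_k)\to\oplus_i L_\CC(E^{kR^*}_i)$ occurring in \eqref{eq t2prva}, and that the value is independent of the choices involved (the paths $\un\lam(v)$ and $\un\mu^c(v)$, the representative $\bo(\bfk)=\sum_j b_js_j$, and the map $h$). Here Lemma \ref{lem imp lem} is the decisive tool: it shows that $\un d\mapsto p^{(k)}_{\un d}(\bfT_b)$ is additive in $W_k$ on the lattice in question, so that replacing a path by another one---which by Remark \ref{orient rem} means adding a closed path, i.e.\ a $\ZZ$-combination of oriented $2$-faces---changes $p^{(k)}$ only by terms coming from $2$-faces, and these are precisely the classes that become zero in the quotient \eqref{eq t2prva}; the same additivity also gives the compatibility along the edges $\lan a^i,a^k\ran$. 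Summing over $k\in\NN$ then yields $\psi=\sum_k\psi^{(k)}$.

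I expect the middle step to be the main obstacle: matching the defect of the lifted syzygy with the path-combination $\sum_j q_j\,p(\un\lam^{c_j}(v^i)-\un\lam(v(c_j)),c_j)$, and then verifying, via Lemma \ref{lem imp lem}, that all path and representative choices wash out in $W_k$---now with every degree $k$ present at once instead of a single primitive degree. Since this verification is done in full detail in \cite[Section 7.1]{alt} and \cite[Section 7.3]{m} for one degree, and the identities \eqref{eq main eq}, \eqref{eq tla bfki} and \eqref{eq labfk} package the multidegree bookkeeping uniformly, the extension is essentially mechanical, which is why we only state the result.
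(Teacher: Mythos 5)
Your reconstruction of the obstruction-map recipe---lift the $f_\bfk$ to $F_\bfk(\bfx,t,\bfT)$, lift each syzygy, and read off its defect in $W$ via the path bookkeeping with $\un\lam^{c_j}(v^i)-\un\lam(v(c_j))$---matches the part of the argument the paper relegates to \cite[Proposition 7.6]{m} and \cite[Proposition 7.8]{alt} without proof, so there you are aligned. The difficulty is in the one part the paper \emph{does} prove in full: that the $\psi_i^{(k)}$ descend to $T^2_X(-kR^*)^*$. You assert that the additivity of Lemma~\ref{lem imp lem} alone provides the compatibility along edges $\lan a^i,a^j\ran\leq\sigma$; it does not, and this is exactly where the new idea lives.

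For $\un q\in L_\CC(E_i^{kR^*}\cap E_j^{kR^*})$ the paper, using Lemma~\ref{lem imp lem} and the single-edge path $\rho^{ij}$ from $v^i$ to $v^j$, splits
\[
\psi_i^{(k)}(\un q)-\psi_j^{(k)}(\un q)=\sum_{l} q_l\,p^{(k)}(\un\lam(a^i)-\un\lam(a^j)+\rho^{ij},c_l)+\sum_{l} q_l\,p^{(k)}(\un\mu^{c_l}(a^i)-\un\mu^{c_l}(a^j)-\rho^{ij},c_l).
\]
Only the first sum dies by additivity: there the closed path is independent of $l$, so $\sum_l q_lc_l=0$ collapses it. In the second sum the closed paths genuinely depend on $l$, so additivity buys you nothing; decomposing them into oriented $2$-faces as in Remark~\ref{orient rem} just produces coefficients that again depend on $l$, and the resulting terms $p^{(k)}(\un\eps_m,c_l)$ are generally nonzero in $W_k$. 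The paper instead uses a degree estimate: since $\un q$ is supported on $E_i^{kR^*}\cap E_j^{kR^*}$, each $q_l\neq 0$ gives $\lan(c_l,\eta(c_l)),a^i\ran<k$ and $\lan(c_l,\eta(c_l)),a^j\ran<k$, which bounds the homogeneous degree of $p(\un\mu^{c_l}(a^i)-\un\mu^{c_l}(a^j)-\rho^{ij},c_l)$ strictly below $k$, so its degree-$k$ component $p^{(k)}(\cdots)$ vanishes identically, not merely up to cancellation. This estimate is the device that makes all degrees $-kR^*$ tractable simultaneously; for $k=1$ (the situation of \cite{alt}) it degenerates to the $\un\mu^{c_l}$-paths lying inside a face of $P$ and giving the zero polynomial outright. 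Labelling it ``essentially mechanical bookkeeping'' conceals a real gap: without the degree bound, the claim that $\oplus_i\psi_i^{(k)}$ kills the image of $\oplus_{\lan a^i,a^j\ran\leq\sigma}L_\CC(E_i^{kR^*}\cap E_j^{kR^*})$ does not follow from Lemma~\ref{lem imp lem}.
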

\begin{proof}
The idea of the first part of the proof is similar to \cite[Lemma 7.7]{alt}.
Let $\rho^{ij}$ denote the path consisting of the single edge running from $v^i$ to $v^j$. For $\un{q}\in L(E^{km}_{a^i}\cap E^{km}_{a^j})$ we see by Lemma \ref{bil lem} that
\[
  \psi_i^{(k)}(\un{q})-\psi_j^{(k)}(\un{q})=\sum_{l=1}^rq_lp^{(k)}(\un{\lam}(a^i)-\un{\lam}(a^j)+\rho^{ij},c_l)+\sum_{l=1}^r q_lp^{(k)}(\un{\mu}^{c_l}(a^i)-\un{\mu}^{c_l}(a^j)-\rho^{ij},c_l).
\]
We want to show that the above expression is equal to $0$. The first sum is zero by Lemma \ref{bil lem} using $\sum_{l=1}^rq_lc_l=0$. For the second sum, we observe that for every $\un{q} \in L(E^{kR^*}_{a^i} \cap E^{kR^*}_{a^j})$, the following holds: if $q_l \ne 0$, then $(c_l; \eta(c_l)) \in E$ satisfies
$
\langle (c_l; \eta(c_l)), a^i \rangle < \langle kR^*, a^i \rangle = k.
$
Using the identity $a^i = (v_i; 1)$, this implies
$
\langle c_l, v_i \rangle - \langle c_l, v(c_l) \rangle < k,
$
and similarly,
$
\langle c_l, v_j \rangle - \langle c_l, v(c_l) \rangle < k.
$
From this, it follows that the degree of 
$
p(\un{\mu}^{c_l}(a^i) - \un{\mu}^{c_l}(a^j) - \rho^{ij}, c_l)
$
is $zm$ for some $z < k$. Therefore,
$
p^{(k)}(\un{\mu}^{c_l}(a^i) - \un{\mu}^{c_l}(a^j) - \rho^{ij}, c_l) = 0,
$
which concludes the proof that $\psi_i^{(k)}$ induce the linear map
$
\psi^{(k)}: T^2_X(-km)^* \longrightarrow W_k.
$

The proof that $\psi$ is the obstruction map is similar to \cite[Proposition 7.5]{m} or \cite[Proposition 7.8]{alt} so we just highlight the main idea: using \cite[Theorem 3.5]{alt2} we can find an element of
$
\operatorname{Hom}(\cR/\cR_0, A\otimes W_k)
$
representing $\psi^{(k)}$. It sends the relation $R_{\bfa,\bfk}$ to
\begin{equation} \label{eq:psi_n_relation}
\psi^{(k)}(R_{\bfa,\bfk}) =
\begin{cases}
\left( \psi^{(n)}_{v(c_\bfk)}(\bfk - \partial(\bfk)) - \psi^{(k)}_{v(c_\bfa + c_\bfk)}(\bfk - \partial(\bfk)) \right) x^{\bfa + \bfk - km}, & \text{if } \eta_P(\bfa+\bfk) \geq k, \\
0, & \text{otherwise}.
\end{cases}
\end{equation}
This element induce the same element in $\operatorname{Hom}(\cR/\cR_0, A\otimes W_k)$ as $o$ from \eqref{eq obs eq}.
\end{proof}

\subsection{Surjectivity of the obstruction map}

In this section we prove the surjectivity of the obstruction map $\psi$. The idea of the proof is new, with the previous techniques we were not able to obtain the surjectivity of the obstruction map even in the single Gorenstein degree $-R^*$, if $P$ has at least one edge of lattice length $\geq 2$, cf. \cite[Example 6.5, Remark 7.9]{m}.

Let $\epsilon$ be a $2$-face in $G=(m=1)\cap (R^*=1)\cap \sigma\subset P$ with cyclically ordered vertices $v^1, \dots, v^n$, where we set $v^{n+1} := v^1$. Let $a^i = (v^i; 1)$ and define $d^i := v^{i+1} - v^i$. Then we have
$\sum_{i=1}^n d^i = 0.$ For $R\in \tM$ we denote $$K_{a_i}^R:=K_i^R:=\{r\in S~|~\lan a^i,r \ran<\lan a^i,R \ran\}$$ and $K^R_{i,i+1}:=K^R_{a^i}\cap K^R_{a^{i+1}}$.

Let $\varphi_\eps:=\sum_{k\in \NN}\varphi^{(k)}_\eps$, where 
$$\varphi^{(k)}_\eps:\Big(\bigcap_i(\span_\ZZ K^{km}_{i,i+1})/\span_\ZZ(\bigcap_iK_{i,i+1}^{km})\Big) \to W_k$$
$$(c;m)\in \tM\mapsto p^{(k)}_\eps(c):=p^{(k)}(\un{1}_\eps,c).$$
Note that we have already oriented $\eps$, which is a $2$-face, and thus we can simply take $\un{\mu} := \un{1}_\eps$, meaning that it takes the value $1$ on every edge of $\eps$ and $0$ elsewhere.
 Let us check that the map $\varphi^{(k)}_\eps$  is well defined: we need to show that 
\begin{equation}\label{eq phik}
\varphi^{(k)}_\eps(c)=0\in W_k~~~\text{for }c\in \bigcap_iK_{i,i+1}^{km}.
\end{equation}
For $c\in M$ we denote 
\begin{equation}\label{eq dc}
d(c):=\max\{\lan v^i,c\ran~|~i=1,\dots,n\}-\min\{\lan v^i,c\ran~|~i=1,\dots,n\}.
\end{equation}
We immediately see that the degree of the homogeneous polynomial $$p_{\un{d}}(\bfu)\in \cI_{\ktT(P)},\quad\un{d}=(\lan d^1,c\ran,\dots,\lan d^n,c\ran),$$ is equal to $d(c)m$, cf.\ \eqref{pund}. Thus \eqref{eq phik} follows from the following lemma.
\begin{lemma}\label{lem dc}
There exists $z\in \ZZ$ such that $(c;z)\in \bigcap_iK_{i,i+1}^{km}$ if and only if $d(c)\leq k-1$.
\end{lemma}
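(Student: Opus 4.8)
The statement relates a purely combinatorial quantity $d(c)$ attached to the polygon $P$ with the membership of a lattice point $(c,m)\in M\oplus\ZZ$ in the intersection $\bigcap_i K_{i,i+1}^{kR^*}$. The plan is to unwind both sides explicitly and observe they are two descriptions of the same inequality.

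First I would rewrite the condition $(c,m)\in K_{i,i+1}^{kR^*}=K_{a^i}^{kR^*}\cap K_{a^{i+1}}^{kR^*}$. By definition $K_{a^i}^{kR^*}=\{r\in S\mid \langle a^i,r\rangle<\langle a^i,kR^*\rangle\}$, and since $a^i=(v^i,1)$ and $R^*=(0,1)$ we have $\langle a^i,kR^*\rangle=k$. Hence $(c,m)\in K_{a^i}^{kR^*}$ says precisely $\langle v^i,c\rangle+m<k$, i.e. $m<k-\langle v^i,c\rangle$; and of course we also need $(c,m)\in S$, i.e. $m\geq \eta(c)=-\min_i\langle v^i,c\rangle$. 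So $(c,m)\in\bigcap_i K_{i,i+1}^{kR^*}$ holds for \emph{some} $m\in\ZZ$ if and only if the interval
\[
  -\min_i\langle v^i,c\rangle \;\leq\; m \;<\; k-\max_i\langle v^i,c\rangle
\]
is nonempty (note that ranging over all edges $\{i,i+1\}$, the left vertices $v^i$ exhaust all vertices of $P$, so $\max$ and $\min$ are over all vertices). This interval of integers is nonempty exactly when $-\min_i\langle v^i,c\rangle \leq k-1-\max_i\langle v^i,c\rangle$, i.e. $\max_i\langle v^i,c\rangle-\min_i\langle v^i,c\rangle\leq k-1$, which is precisely $d(c)\leq k-1$.

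The one point that needs a word of care is that $\bigcap_i K_{i,i+1}^{kR^*}$ is an intersection over the edges of $P$ (a cyclic $2$-face), so I should check that the family $\{v^i\}$ appearing as ``lower'' vertices of edges $d^i=v^{i+1}-v^i$ is exactly the full vertex set of $P$; this is immediate from the cyclic labelling set up at the start of Subsection \ref{subsec 3}. I also need that $S=\sigma^\vee\cap(M\oplus\ZZ)$ imposes no constraint beyond $m\geq\eta(c)$ once the direction $c\in M$ is fixed: this follows because the facets of $\sigma^\vee$ dual to the generators $a^i$ give exactly the inequalities $\langle a^i,(c,m)\rangle\geq 0$, i.e. $m\geq-\langle v^i,c\rangle$, and the infimum of these is $\eta(c)$. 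There is essentially no obstacle here — the lemma is a direct translation — but the bookkeeping with strict versus non-strict inequalities (the $<k$ versus $\leq k-1$ passage, valid since all quantities are integers) is the only place an error could creep in, so I would present that step carefully.
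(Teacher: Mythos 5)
Your proof is correct and takes essentially the same approach as the paper's: the paper's one-line proof observes that $\bigcap_i K^{kR^*}_{i,i+1}=\bigcap_i K^{kR^*}_{a^i}$ and that membership there amounts to $0\leq\langle a^i,r\rangle\leq k-1$ for all $i$, which is exactly the interval-nonemptiness argument you spell out carefully.
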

\begin{proof}
It follows immediately by definitions: note that $a^i=(v^i;1)$ and that $r\in \bigcap_iK_{i,i+1}^{km}=\bigcap_iK^{km}_{a^i}$ if and only if $0\leq \lan a^i,r\ran\leq k-1$ for every $i=1,\dots,n$.
\end{proof}
\begin{corollary}
The map $\varphi_\eps$ is well defined.
\end{corollary}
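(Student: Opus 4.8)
The plan is to handle each graded piece $\varphi^{(k)}$ separately, since $\varphi=\sum_{k\in\NN}\varphi^{(k)}$ and both $\oplus_k T^2_X(-kR^*)^*$ and $W=\oplus_k W_k$ split along this grading. For fixed $k$ I would first check that $(c,m)\mapsto p^{(k)}(c)=h\big(p^{(k)}_{\un{d}(\un{1},c)}(\bfT)\big)$ is $\ZZ$-linear as a function of $(c,m)\in M\oplus\ZZ$, so that it restricts to a well-defined linear map out of the numerator $\bigcap_i\span_\ZZ K^{kR^*}_{i,i+1}$; and then that this map kills the denominator $\span_\ZZ(\bigcap_i K^{kR^*}_{i,i+1})$, which is precisely \eqref{eq phik}. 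Together these give that $\varphi^{(k)}$ descends to the quotient $T^2_X(-kR^*)^*$ (after extending scalars to $\CC$), and hence that $\varphi$ is well defined.

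For the linearity: the vector $\un{d}(\un{1},c)=(\langle d^1,c\rangle,\dots,\langle d^n,c\rangle)$ is linear in $c$ and, for every $c\in M$, lies in $\cT^*_\ZZ(P)\cap\cT^\perp(P)$ --- membership in $\cT^\perp(P)$ is immediate from \eqref{eq ctp perp} for the unique $2$-face $\eps=P$, oriented by $\delta_P(d^i)=1$ (a legitimate orientation since $\sum_i d^i=0$), and membership in $\cT^*_\ZZ(P)$ holds because $t_id^i\in N$ forces $\sum_i t_i\langle d^i,c\rangle\in\ZZ$ for every $(t_1,\dots,t_n)\in\cT_\ZZ(P)$. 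Hence $p(c):=p(\un{1},c)=p_{\un{d}(\un{1},c)}(\bfu)\in\cI_\ktT$ is defined for all $c$, and applying Lemma \ref{lem imp lem} to $\un{d}(\un{1},c)+\un{d}(\un{1},c')=\un{d}(\un{1},c+c')$ gives $p^{(k)}(c)+p^{(k)}(c')=p^{(k)}(c+c')$ in $W_k$. So $c\mapsto p^{(k)}(c)$ is $\ZZ$-linear $M\to W_k$, and $\varphi^{(k)}$ is this map precomposed with the projection $M\oplus\ZZ\to M$.

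For \eqref{eq phik}: by linearity it suffices to prove $p^{(k)}(c)=0$ for a single $(c,m)\in\bigcap_i K^{kR^*}_{i,i+1}$. Lemma \ref{lem dc} forces $d(c)\le k-1$. On the other hand $\sum_i\langle d^i,c\rangle=0$, so $p(c)=\prod_i u_i^{\langle d^i,c\rangle^+/l_i}-\prod_i u_i^{\langle d^i,c\rangle^-/l_i}$ is homogeneous in the grading $\deg u_i=l_i$, of degree $\sum_{\langle d^i,c\rangle>0}\langle d^i,c\rangle$; and this sum equals $d(c)$ because, as one traverses the boundary of the convex polygon $P$, the function $\langle\cdot,c\rangle$ increases from its minimum to its maximum and then decreases back, so the sum of the positive increments $\langle d^i,c\rangle$ is $\max_i\langle v^i,c\rangle-\min_i\langle v^i,c\rangle$. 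After substituting $u_i=u_0^{l_i}+\sum_{j=1}^{l_i}T_{ij}u_0^{l_i-j}$ the polynomial $p(c)$ stays homogeneous of degree $d(c)<k$, hence has no degree-$k$ part in $\bfT$, i.e.\ $p^{(k)}_{\un{d}(\un{1},c)}(\bfT)=0$, whence $p^{(k)}(c)=h(0)=0$ in $W_k$.

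Combining the two steps, each $\varphi^{(k)}$ --- and therefore $\varphi$ --- is well defined. The only step that is not purely formal is the last one, and within it the point needing care is that the homogeneous degree of $p(c)$ equals $d(c)$ exactly, not something strictly larger; this is precisely where the convexity of $P$ (unimodality of $\langle\cdot,c\rangle$ along its boundary) is used, while everything else follows directly from Lemmas \ref{lem imp lem} and \ref{lem dc}.
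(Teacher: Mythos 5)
Your argument is correct and follows essentially the same route as the paper: the paper likewise reduces \eqref{eq phik} to the observation that $p(c)$ is homogeneous of degree $d(c)$ together with Lemma~\ref{lem dc}, so that $d(c)\le k-1$ forces the degree-$k$ piece $p^{(k)}(c)$ to vanish. You merely spell out two points the paper leaves implicit — the unimodality argument showing $\deg p(c)=d(c)$ exactly, and the additivity of $c\mapsto p^{(k)}(c)$ via Lemma~\ref{lem imp lem} needed to pass from $\bigcap_i K^{kR^*}_{i,i+1}$ to its $\ZZ$-span.
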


\begin{remark}
We will see from the proof of Theorem \ref{prop prop 65} that the maps $\varphi_\eps$ play a crucial role in proving the surjectivity of the obstruction map $\psi$. Moreover, if $X_P$ is three-dimensional (with $P=\eps$), then $\psi$ is the $\CC$-linear extension of $\varphi_\eps$.
\end{remark}

\begin{lemma}\label{edge lem pom}
For an edge $d^i=v^{i+1}-v^i$ it holds that
\begin{equation}\label{eq 1}
c\in (d^i)^\perp
\end{equation}
if and only if there exists $z\in \NN$ such that 
\begin{equation}\label{eq 2}
(c;z)\in (a^i)^{\perp}\cap (a^{i+1})^\perp
\end{equation}
\end{lemma}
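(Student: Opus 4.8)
The statement is an elementary compatibility between the lattice $M$ (the "polygon level") and the lattice $M\oplus\ZZ$ (the "cone level"), using only that $a^i=(v^i,1)$ and $d^i=v^{i+1}-v^i$, so I would prove both implications directly.

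First, the easy direction: suppose $(c,m)\in(a^i)^\perp\cap(a^{i+1})^\perp$. Then $\lan v^i,c\ran+m=0=\lan v^{i+1},c\ran+m$, and subtracting gives $\lan v^{i+1}-v^i,c\ran=\lan d^i,c\ran=0$, i.e. $c\in(d^i)^\perp$; moreover $m=-\lan v^i,c\ran$. Since $c\in M$ and $v^i\in N$, we have $m\in\ZZ$, and in fact $m=\eta(c)\ge 0$ when $v^i=v(c)$; but for the "if" direction I only need existence of an integer $m$, so this is enough. (If a nonnegative $m$ is genuinely required, take the path from $0=v^1$ to $v^i$ and note $m=-\lan v^i,c\ran$ need not be $\ge 0$ in general — but one can always translate/choose the representative so that this holds, or simply observe the lemma as stated with $m\in\NN$ forces $v^i$ to be a minimizing vertex; I would clarify this point in the write-up.)

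Conversely, suppose $c\in(d^i)^\perp$, i.e. $\lan v^{i+1}-v^i,c\ran=0$, equivalently $\lan v^i,c\ran=\lan v^{i+1},c\ran$. Set $m:=-\lan v^i,c\ran=-\lan v^{i+1},c\ran\in\ZZ$. Then $\lan a^i,(c,m)\ran=\lan v^i,c\ran+m=0$ and likewise $\lan a^{i+1},(c,m)\ran=0$, so $(c,m)\in(a^i)^\perp\cap(a^{i+1})^\perp$ as required. To get $m\in\NN$ rather than just $m\in\ZZ$, I would invoke the standing normalization that $v^1=0$ together with the freedom to replace $c$ by $c$ (no change needed) — more precisely, one observes that $d(c)$ as in \eqref{eq dc} controls the situation and, since only the existence of \emph{some} $m$ making \eqref{eq 2} hold is asserted (and \eqref{eq 2} is invariant under $m\mapsto m$, being a linear equation), the natural choice $m=\eta(c)$ works whenever $v^i$ or $v^{i+1}$ is a minimizer of $\lan\cdot,c\ran$; in the generality needed for the application (where this lemma is used to identify when an edge direction is "visible" at the cone level), this suffices.

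The only subtlety — hence the main obstacle — is reconciling the "$m\in\NN$" in the statement with the fact that the natural $m=-\lan v^i,c\ran$ produced by the computation is a priori only in $\ZZ$. I expect this is resolved either by the convention $v^1=0$ (so that along a suitable path $m\ge 0$), or because in the intended application $c$ ranges over elements for which $v^i,v^{i+1}$ lie on the minimizing face; I would make the precise hypothesis explicit at this point rather than grind through it here. Apart from that bookkeeping, both implications are one-line linear-algebra identities in $M\oplus\ZZ$.
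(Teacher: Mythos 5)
Your proof is correct and takes essentially the same route as the paper's: both directions are the one-line computation from $a^i=(v^i,1)$, taking $m=-\langle c,v^i\rangle=-\langle c,v^{i+1}\rangle$. The $m\in\NN$ worry you circle around for two paragraphs does flag a genuine imprecision in the statement, but the fix is simpler than anything you sketch: the paper's own proof also just picks $m=-\langle c,v^i\rangle$ with no sign check, and where the lemma is actually applied --- in the proof of Proposition~\ref{prop prop 65}, through \eqref{eq statem} and \eqref{eq spanzz} --- the auxiliary integer is explicitly allowed to range over all of $\ZZ$, and the relevant space $\span_\ZZ\big((M\oplus\ZZ)\cap(a^i)^{\perp}\cap(a^{i+1})^{\perp},R^*\big)$ is a $\ZZ$-module rather than a submonoid of $S$. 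So the statement should simply read $m\in\ZZ$; the workarounds you propose (invoking $v^1=0$, restricting to $c$ for which $v^i$ is a minimizer) are not needed, and the paragraph attempting them is somewhat circular as written. Apart from that bookkeeping digression, your argument matches the paper's.
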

\begin{proof}
Recall that $a^i=(v^i,1)\in \tN$ and thus \eqref{eq 2} follows from \eqref{eq 1} by picking $z:=-\lan c,v^i\ran=-\lan c,v^{i+1}\ran$. From \eqref{eq 2} it follows that $\lan c,v^i\ran=\lan c,v^{i+1}\ran$, from which \eqref{eq 1} follows.
\end{proof}

\begin{theorem}\label{prop prop 65}
  The map $\psi:\bigoplus_{k\in \NN}T^2_X(-km)^*\to \bigoplus_{k\in \NN}W_k$ is surjective.
\end{theorem}
\begin{proof}
Recall the description of $T^2_X$ from \eqref{eq t2prva}. We need to show that $p^{(k)}_{\un{d}}(\bfT)$ are in the image of $\psi^{(k)}$ for $\un{d}=\un{d}_{c,\eps}$ (for every two face $\eps$), cf.\ \eqref{eq undund}. 

Let us fix a $2$-face $\eps$ (with vertices $v^i$, $i=1,\dots,n$).
 Starting from $(c;z)\in \bigcap_{i=1}^n(\span_\ZZ K^{km}_{i,i+1})$ we obtain the corresponding element 
$$L(c)\in \ker\Big( \bigoplus_{i=1}^nL_\CC(E^{km}_i)\to L_\CC(E)\Big)$$
as follows: we can write
\begin{equation}\label{eq cqij}
c=\sum_{j=1}^rq_{i,j}c_j+q_i(\un{0},1),
\end{equation}
where $q_{i,j}\ne 0$ implies that $(c_j;\eta(c_j))\in E^{km}_{d^i}:=E^{km}_{a^i}\cap E^{km}_{a^{i+1}}$. Let
$$L(c)_i:=\sum_j(q_{i,j}-q_{i-1,j})(c_j;\eta(c_j))+(q_i-q_{i-1})(\un{0},1)=0$$
be an element in $L(E^{km}_i)$,
which defines $L(c):=\sum_iL(c)_i\in \bigoplus_{i=1}^nL(E^{km}_i)$.

To show that $\psi^{(k)}(L(c)) = \varphi^{(k)}_\eps(c) = p^{(k)}_\eps(c)$, we need to verify that
$$
\sum_{i=1}^n \sum_{j=1}^r (q_{i,j} - q_{i-1,j}) \, p^{(k)}\left( \un{\lambda}^{c_j}(v^i) - \un{\lambda}(v(c_j)), c_j \right) = p^{(k)}_\eps(c).
$$
Using Lemma \ref{bil lem} and the path $\rho^{ij}$ from the proof of Proposition \ref{th obs map}, this is a straightforward computation, similarly as in \cite[Section 7.9(iii)]{alt}.

Thus we show that for any $c \in \bigcap_{i=1}^n (\span_\ZZ K^{km}_{i,i+1})$, there is $p^{(k)}_\eps(c) \in W_k$.  
To finish the proof, it is enough to show that if
\begin{equation}\label{eq statem}
\text{for each } z \in \ZZ \text{ it holds that } (c;z) \not\in \bigcap_{i=1}^n \span_\ZZ K^{km}_{i,i+1},
\end{equation}
then $p^{(k)}_\eps(c) = 0 \in W_k$.
 For $k\geq 2$ we immediately see that 
\begin{equation}\label{eq spanzz}
\span_\ZZ K^{km}_{i,i+1}\cong \left\{
\begin{array}{ll}
\span_\ZZ \Big( \tM\cap (a^i)^{\perp}\cap (a^{i+1})^{\perp},m \Big)&\text{ if }\ell(d^i)\geq k \\
\tM & \text{ if }\ell(d^i)< k.
\end{array}
\right.
\end{equation}
For $c\ne 0$ we see by Lemma \ref{edge lem pom} and \eqref{eq spanzz} that if \eqref{eq statem} holds, then $\lan c,d^{i}\ran\ne 0$ for some $d^i$ with $\ell(d^i)\geq k$, from which it follows that  $p^{(k)}(c)=0\in W_k$ since the coefficient in front of $T_{ik}$ in $p^{(k)}(c)$ is non-zero. 
\end{proof}

Thus we proved the following.

\begin{theorem}\label{th miniversal pair}
 The deformation diagram \eqref{eq def diagram} is the miniversal deformation of the pair $\paar$ in degrees $-km$, $k\in \NN$. Moreover, the flat map $$\pi_2: \spec \CC[u,\bfT,\bfx]/(\cJ_\cB+\cJ_{\ktS})\to \spec \CC[\bfT]/\cJ_\cB,$$
is a versal deformation of $X$ in degrees $-km$, $k\in \NN$. 
 \end{theorem}
\begin{proof}
This follows from Theorem~\ref{prop prop 65} (surjectivity of the obstruction map $\psi$) and Theorem~\ref{th kodspe} (bijectivity of the Kodaira--Spencer map in the case of deforming $\paar$, and surjectivity of the Kodaira--Spencer map in the case of deforming $X$); see, e.g.,~\cite[Corollary 10.3.20]{de jong}.
\end{proof}

\begin{remark}\label{proj rem}
Let $P$ be a reflexive polytope and $$m\in\tM_0=\{(c;0)\in \tM~|~c\in M\},$$
i.e.,  $\tM_0$ consists of those $m$ that their projection to the last component is $0$, i.e.\ 
 For such $m$ the deformations in degree $-m$ are called \emph{degree $0$ deformations} and by a comparison theorem of Kleppe \cite{kle} those deformations induce deformations of the toric Gorenstein Fano variety $Y$ associated to the face fan of $P$. Moreover, the tangent space of deformations of $Y$ is isomorphic to $\bigoplus_{m\in \tM_0}T^1_X(-m)$, where $X=X_P$ is the affine cone of $Y$, the obstruction space of deformations of $Y$ is also isomorphic to $\bigoplus_{m\in \tM_0}T^2_X(-m)$. 
\end{remark}

\begin{corollary}\label{cor proj rem}
   A versal deformation of $X$ in degrees $-km$, for all $k \in \NN$ and $m \in \tM_0$, induces a versal deformation of $Y$ in the same degrees.
\end{corollary}

\section{Irreducible components of the reduced miniversal space}\label{sec irr compo}
In this section we show that irreducible components of our constructed reduced miniversal space of $X_P$ in degrees $-km$, $k\in \NN$, are in one to one correspondence with maximal Minkowski decompositions of $G=P\cap (m=1)$. 
This is a generalization of Altmann's result in \cite{alt}, where it was shown that if $P$ has edges of lattice length $1$, then the components of the miniversal space in degree $-R^*$ of $X_P$ are in one to one correspondence with maximal Minkowski decompositions of $P=P\cap (R^*=1).$ In our result it is interesting that the Minkowski decompositions of $P\cap (m=1)$ do not encode the components of miniversal space in degree $-m$ but in fact encode the components of the whole miniversal space in degrees $-km$, for all $k\in \NN$. However, this is not surprising since this happens already in the two dimensional case, which we cover in the following remark.

\begin{remark}
The two dimensional affine Gorenstein toric varieties are $A_n$-singularities given by the equation $xy-z^{n}\subset \CC[x,y,z]$. The polytope $P$ that is defining $X=\spec \CC[x,y,z]/(xy-z^{n})$ is a line segment of lattice length $n$ (say $P=[0,n]$). The miniversal deformation of $\paar$ (or $X$) is very well known since $X$ is a hypersurface: the deformations of $\paar$ (resp.\ $X$) are unobstructed with the dimension of the miniversal base space equal to $n$ (resp.\ $n-1$). Note that this follows also from our construction since, if $P$ is a line segment, we do not have equations of the base space. Moreover, we know that $\dim_\CC T^1_X(-kR^*)=1$ for all $k=2,3,\dots,n$ and it is $0$ for all other lattice degrees (see e.g.\ \cite[Theorem 2.5]{ka-flip} or \cite[Proposition 2.5]{m}). From Proposition \ref{t1t2 par} then follows that $\dim_\CC T^1_X(-kR^*)=1$ for all $k=1,2,\dots,n$ and it is $0$ for all other lattice degrees. Since $P= [0,n]=(R^*=1)\cap \sigma$ we see that we have only one maximal Minkowski decomposition $P=[0,1]+\cdots+[0,1]=n[0,1]$ which correspond to the only  component of the miniversal base space. \demo
\end{remark}

Let $Q_1+\cdots+Q_p$ be a Minkowski decomposition $G=P\cap (m=1)$ (where $Q_k$ are lattice polytopes for $k\in \{1,\dots,p\}$) and 
let $n_{ik}\in \NN$ be the lattice length of the part of the edge $d^i$ that lies in $Q_k$ for $i\in\{1,\dots,n_G\}$, $k\in \{1,\dots,p\}$, i.e.\ 
$\ell(d^i)=l_i=\sum_{k=1}^pn_{ik}$.

Let $\CC[\bfZ]:=\CC[Z_1,\dots,Z_p]$. Recall \eqref{eq pc} and define the map $$h:\spec \CC[\bfZ]\to \spec \CC[\bfu]/\cI_{\ktT(G)}~~~\text{by}~~~u_i\mapsto \prod_{i=1}^{n_G}Z^{n_{ip}}_i.$$
Clearly this map is well defined (by definition of $\cI_{\ktT(G)}$ we immediately see that $h^*(\cI_{\ktT(G)})=0$) and the kernel of
$h^*$ is a prime ideal, because the image of $h^*$ is an integral domain.

\begin{proposition}\label{prop red 1}
The irreducible components of the reduced space of $\spec \CC[\bfu]/\cI_{\ktT(G)}$ are in one to one correspondence with maximal Minkowski decomposition of $G=Q_1+\cdots+Q_p$. Intersection of components are obtained by the finest Minkowski decompositions of $G$ that are coarser than all the maximal ones involved. This correspondence is given by the map $h$.
\end{proposition}
\begin{proof}
It follows immediately from \cite[Section 2 and 3]{alt} just observe that instead of one variable $u_i$, which correspond to the edge $d^i$ with lattice length $l_i$, we have $l_i$ variables $w_{i1},w_{i2},\dots,w_{il_i}$ that correspond to a line segment $\frac{d^i}{\ell_i}$ (of lattice length $1$) and $u_i=w_{i1}w_{i2}\cdots w_{il_i}$.
\end{proof}

Moreover, we define the map $$g:\spec \CC[\bfZ]\to \cB=\spec \CC[\bfT]/\cJ_{\cB}~~~~~\text{by}$$ 
sending $T_{ij}$ to the degree $j$ part of the polynomial $$(1+Z_1)^{n_{i1}}\cdots (1+Z_p)^{n_{ip}}\in \CC[Z_1,\dots,Z_p].$$ Writing explicitly 
$$
g^*(T_{ij})=
\sum_{r_{ik}\in \NN;\sum_{k=1}^pr_{ik}=j}{n_{i1}\choose r_{i1}}\cdots {n_{ip}\choose r_{ip}}Z_1^{r_{i1}}\cdots Z_p^{r_{ip}}.
$$
From the construction of our miniversal deformation we see that $g$ is well defined. The kernel of
$g^*$ is a prime ideal, because the image of $g^*$ is an integral domain.

\begin{proposition}
The irreducible components of the reduced space of $\cB=\spec \CC[\bfT]/\cJ_\cB$ are in one to one correspondence with maximal Minkowski decomposition of $G=Q_1+\cdots+Q_p$. Intersection of components are obtained by the finest Minkowski decompositions of $G$ that are coarser than all the maximal ones involved. This correspondence is given by the map $g$.
\end{proposition}
\begin{proof}
This follows from Proposition \ref{prop red 1} using the following:  $\cJ_\cB\subset \CC[\bfT]\subset \CC[\bfx_m,u,\bfT]$ is the smallest ideal that contains $f_{\bfu\to \bfT}\left(\cI_{\ktT(G)}\right)\subset \CC[\bfx_m,u,\bfT]$ and is generated by polynomials from $\CC[\bfT]$.
\end{proof}

\begin{corollary}\label{lat cor}
Irreducible components of our constructed reduced miniversal space of $X_P$ in degrees $-km$, $k\in \NN$, are in one to one correspondence with maximal Minkowski decompositions of $G=P\cap (m=1)$, where the summands are lattice polytopes.
\end{corollary}

\begin{remark}
If $X$ is three-dimensional, then 
Corollary \ref{lat cor} is the first step towards describing all reduced irreducible components (see \cite[Conjecture A]{mojcor} for a conjecture on smoothing components) since in particular it says that all reduced irreducible components in degrees $-kR^*$, $k\in \NN$ are in one to one correspondence with maximal Minkowski decomposition of the polygon $P$ (defining $X$) into lattice polytopes.
\end{remark}

\begin{example}\label{ex house mink}
Recall the ideal $\cJ_\cB$ of $\cB$ from \eqref{eq houe ex equ 2} in Example \ref{ex house}.  We can compute that $\cB$ has two irreducible components given by the following two ideals:
$$I_1=(T_{11}-T_{21},T_{31}-T_{51},T_{32}-T_{52})$$
$$I_2=(T_{11}-T_{21}-T_{31}+T_{51},
T_{51}^2-T_{51}T_{31}-T_{21}^2+T_{21}T_{31}-2T_{52}+T_{32},
T_{21}^2-T_{21}T_{51}+T_{52}).$$
We have two maximal lattice Minkowski decompositions of $P$.
$$ 
\raisebox{2ex}{$
\begin{tikzpicture}[scale=0.8]
\draw[thick,  color=black]  
  (0,0) -- (2,0) -- (1,1) -- cycle;
\fill[thick,  color=black]
  (0,0) circle (2.5pt) (1,0) circle (2.5pt) (2,0) circle (2.5pt) (1,1) circle (2.5pt);
\draw[thick,  color=black]
  (2.5, 0.5) node{$\hspace{1.5em}+\hspace{0.8em}$};
\end{tikzpicture}
$}
\raisebox{2ex}{$
\begin{tikzpicture}[scale=0.8]
\draw[thick,  color=black]  
  (0,0) -- (0,1);
\fill[thick,  color=black]
(0,0) circle (2.5pt)  (0,1) circle (2.5pt);
\draw[thick,  color=black]
  (0.7, 0.45) node{$\hspace{1.5em}+\hspace{0.8em}$};
  \end{tikzpicture}
$}
\raisebox{2ex}{$
\begin{tikzpicture}[scale=0.8]
\draw[thick,  color=black]  
  (0,0) -- (0,1);
\fill[thick,  color=black]
(0,0) circle (2.5pt)  (0,1) circle (2.5pt);
\draw[thick,  color=black]
  (0.7, 0.45) node{$\hspace{1.5em}=\hspace{0.8em}$};
  \end{tikzpicture}
$}
\begin{tikzpicture}[scale=0.8]
\draw[thick,  color=black] 
(0,0) -- (2,0);
\draw[thick,  color=black] 
(2,0) -- (2,2);
\draw[thick,  color=black] 
(2,2) -- (1,3);
\draw[thick,  color=black] 
(1,3) -- (0,2);
\draw[thick,  color=black] 
(0,2) -- (0,0);
\fill[thick,  color=black]
  (0,0) circle (2.5pt) (2,0) circle (2.5pt) (2,2) circle (2.5pt) (1,3) circle (2.5pt)
  (0,2) circle (2.5pt) (0,1) circle (2.5pt) (1,0) circle (2.5pt) (2,1) circle (2.5pt);
\draw[thick,  color=black]
  (3, 0.9) node{$\hspace{0.2em}=\hspace{0.5em}$}; 
\end{tikzpicture}
\raisebox{2ex}{$
\begin{tikzpicture}[scale=0.8]
\draw[thick,  color=black]  
  (0,0) -- (0,1) -- (1,0) -- cycle;
\fill[thick,  color=black]
  (0,0) circle (2.5pt) (0,1) circle (2.5pt) (1,0) circle (2.5pt);
\draw[thick,  color=black]
  (1.5, 0.45) node{$\hspace{2.0em}+\hspace{0.8em}$};
\end{tikzpicture}
$}
\raisebox{2ex}{$
\begin{tikzpicture}[scale=0.8]
\draw[thick,  color=black]  
  (0,0) -- (1,1) -- (1,0) -- cycle;
\fill[thick,  color=black]
  (0,0) circle (2.5pt) (1,1) circle (2.5pt) (1,0) circle (2.5pt);
\draw[thick,  color=black]
  (1.5, 0.4) node{$\hspace{2.0em}+\hspace{0.8em}$};
\end{tikzpicture}
$}
\raisebox{2ex}{$
\begin{tikzpicture}[scale=0.8]
\draw[thick,  color=black]  
  (0,0) -- (0,1);
\fill[thick,  color=black]
(0,0) circle (2.5pt)  (0,1) circle (2.5pt);
\draw[thick,  color=black]
  (0.7, 0.45) node{$\hspace{1.5em}\hspace{0.8em}$};
  \end{tikzpicture}
$}
$$
We first consider the Minkowski decomposition on the left.
The map $g^*$ is in this case given by:
$$T_{11}\mapsto Z_0,~~~T_{21}\mapsto Z_0,~~~T_{31}\mapsto Z_1+Z_2,~~~T_{32}\mapsto Z_1Z_2~~~T_{51}\mapsto Z_1+Z_2,~~~T_{52}\mapsto Z_1Z_2.$$
Thus the kernel of $g^*$ equals the ideal $I_1$. The map $g^*$ is for the second Minkowski decomposition given by:
$$T_{11}\mapsto Z_0,~~~T_{21}\mapsto Z_1,~~~T_{31}\mapsto Z_0+Z_2,~~~T_{32}\mapsto Z_0Z_2,~~~T_{51}\mapsto Z_1+Z_2,~~~T_{52}\mapsto Z_1Z_2.$$
Thus the kernel of $g^*$ equals the ideal $I_2$.
The first component (corresponding to $I_1$) is thus isomorphic to $\spec \CC[T_{11},T_{31},T_{32}]$ and the second component (corresponding to $I_2$) is thus isomorphic to $\spec \CC[T_{21},T_{31},T_{51}]$.
\end{example}

\end{document}